\documentclass[11pt, a4paper]{amsart}

\usepackage{amsmath, amssymb, amsthm}
\usepackage{mathtools}
\usepackage{enumitem}
\usepackage{hyperref}
\usepackage{tikz}
\usepackage{booktabs}

\hypersetup{
  colorlinks=true,
  linkcolor=blue!60!black,
  citecolor=green!50!black,
  urlcolor=blue!70!black
}

\newtheorem{theorem}{Theorem}[section]
\newtheorem{proposition}[theorem]{Proposition}
\newtheorem{lemma}[theorem]{Lemma}
\newtheorem{corollary}[theorem]{Corollary}
\theoremstyle{definition}

\newtheorem{example}[theorem]{Example}
\newtheorem{remark}[theorem]{Remark}

\newcommand{\ZZ}{\mathbb{Z}}

\newcommand{\cA}{\mathcal{A}}

\DeclareMathOperator{\indeg}{indeg}

\title[Coordinate projections of $c$-vectors]{%
  Coordinate projections of $c$-vectors of cluster algebras
  from the annulus}

\author{Sarah B.\ Brodsky}

\subjclass[2020]{Primary 13F60; Secondary 05E10, 16G20}
\keywords{cluster algebras, $c$-vectors, affine type, real Schur roots,
tubes, annulus}

\date{\today}

\begin{document}

\begin{abstract}
For an acyclic cluster algebra, the $c$-vectors are, up to sign, the real
Schur roots of the associated root system. We study the two-coordinate
projections $(c_v, c_w)$ of this configuration: when the difference
$c_v - c_w$ is bounded, the image lies in a finite band of lattice lines,
and we ask when the projection fills that band. In finite type, boundedness
is automatic; in affine type we prove that it is equivalent to equality of
the corresponding null-root coordinates, $\delta_v = \delta_w$. We then
resolve the filling problem for affine type $\widetilde{A}_n$ in the
source-sink orientation: every coordinate projection fills its band except
for the source-sink pair, whose diagonal contains only the finite regular
part. More generally, we classify the non-filling banded pairs for every
acyclic orientation of every affine diagram: apart from the source-sink
diagonals in type $\widetilde{A}$, non-filling occurs only at boundary lines
reached by a unique extremal real root, and precisely when the geodesic
joining the two vertices is balanced. The obstruction is the
Auslander--Reiten defect; on the annulus this picture is topological: the
defect is the crossing number with the core curve, and the $\delta$-shift is
the Dehn twist along it.
\end{abstract}

\maketitle

\section{Introduction}
\label{sec:intro}

Let $Q$ be an acyclic quiver on vertex set $Q_0$, and let $\cA(Q)$ be the
cluster algebra with principal coefficients at the seed associated with
$Q$. The $c$-vectors of $\cA(Q)$, collected over all seeds, form a finite
or infinite subset $C(Q) \subset \ZZ^{Q_0}$; by sign-coherence
\cite{GHKK2018} each $c$-vector is either nonnegative or nonpositive, and
by N\'ajera Ch\'avez \cite{NajeraChavez2013} the positive $c$-vectors are
exactly the real Schur roots of $Q$, the dimension vectors of the
exceptional indecomposable representations. Thus
\[
  C(Q) \;=\; \{\, \pm\beta : \beta \text{ a real Schur root of } Q \,\}.
\]

For a pair of vertices $v, w \in Q_0$ we consider the coordinate
projection
\[
  \pi_{vw} \colon \ZZ^{Q_0} \to \ZZ^2,
  \qquad
  \pi_{vw}(c) = (c_v, c_w).
\]
When the difference $c_v - c_w$ is bounded over $C(Q)$, the image
$\pi_{vw}(C(Q))$ is contained in a finite union of lattice lines, the
\emph{band} $\{(x, y) : |x - y| \leq b_{vw}\}$ with $b_{vw} = \sup_{c}|c_v
- c_w|$. In finite and affine type, the present paper answers two questions: for
which pairs is the difference bounded, and, when it is, when does
$\pi_{vw}$ surject onto the band.

Two things make these questions natural. First, the $c$-vector set is a
fundamental mutation invariant: it records the tropical dynamics of the
coefficients \cite{FominZelevinsky2007, NakanishiZelevinsky2012}, it is
sign-coherent \cite{GHKK2018}, and in the acyclic case it is the set of
signed real Schur roots \cite{NajeraChavez2013, SpeyerThomas2013}, so
questions about its geometry are questions about the root system seen
through the cluster structure. Coordinate projections are the simplest
two-dimensional shadows of this configuration; the band is exactly the
region a bounded-difference shadow can occupy, and filling is the
completeness question for the shadow. Second, the answer is not merely
combinatorial: we show that the sole obstruction to filling is the
Auslander--Reiten defect, so an elementary projection statistic detects a
homological invariant; and the failure locus, which we classify completely
over every affine type and every acyclic orientation
(Section~\ref{sec:classification}), is governed by a balance law on the
geodesic joining the two coordinates. In the annulus the whole story has a
topological reading, in the line of the descriptions of $c$-vectors by
curves on surfaces \cite{FominShapiroThurston2008, Hong2021,
LeeLeeMills2019}: the $c$-vectors are the arcs of the annulus, the defect
is the crossing number with the core curve, and the $\delta$-shift is the
Dehn twist along it (Section~\ref{sec:surface}).

The first question has a uniform answer in finite and affine type; in the
affine case it is governed by the null root.

\begin{theorem}[Band-existence dichotomy]
\label{thm:band_dichotomy}
Let $Q$ be an acyclic quiver. If $Q$ is of finite type, then $C(Q)$ is
finite, so every coordinate difference is bounded. If $Q$ is of affine
type with null root $\delta$, then for $v, w \in Q_0$,
\[
  \sup_{c \in C(Q)} |c_v - c_w| < \infty
  \qquad\Longleftrightarrow\qquad
  \delta_v = \delta_w,
\]
and when $\delta_v = \delta_w$ one has
$b_{vw} = \max_{\alpha} |\alpha_v - \alpha_w|$ over the real roots
$\alpha$ of the underlying finite root system.
\end{theorem}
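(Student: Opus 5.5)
Using the identification $C(Q)=\{\pm\beta\}$ with $\beta$ a real Schur root (recalled in the introduction), it suffices to control $|\beta_v-\beta_w|$ over real Schur roots $\beta$. In finite type every indecomposable is exceptional and there are finitely many positive roots, so $C(Q)$ is finite and boundedness is immediate. In the affine case we use the standard description of real roots of an affine root system: every real root has the form $\alpha+m\delta$, where $\alpha$ is a real root of the finite root system obtained by deleting an extending vertex $e$ with $\delta_e=1$, and $m\in\ZZ$. Then
\[
  (\alpha+m\delta)_v-(\alpha+m\delta)_w=(\alpha_v-\alpha_w)+m(\delta_v-\delta_w).
\]
Here $\alpha$ is extended by zero at $e$, and the finite root system is finite.

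\emph{The direction $\delta_v=\delta_w\Rightarrow$ bounded.} Every $c$-vector is $\pm(\alpha+m\delta)$, so $|c_v-c_w|=|\alpha_v-\alpha_w|$. This is at most $\max_\alpha|\alpha_v-\alpha_w|$ over the finite roots, which gives $b_{vw}\le$ that maximum. For the reverse inequality we must realize the maximum by a real \emph{Schur} root, which is the one genuinely nontrivial point. Take $\alpha$ attaining the maximum; we may replace it by $-\alpha$ and assume $\alpha>0$. For $m\gg0$, the root $\alpha+m\delta$ (or $-\alpha+m\delta$) is a positive real root of the same difference. We need one such root in the orbit $\{\pm\alpha+m\delta\}$ to be Schur.

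Our plan for this step is to use the structure of the AR quiver of a tame hereditary algebra. Preprojective and preinjective indecomposables are exceptional, and their dimension vectors are real roots. Under $\tau^{\pm}$ the defect $\langle\delta,-\rangle$ is preserved while dimension vectors shift by adding multiples of $\delta$ modulo the Coxeter transformation. The cleanest route is to show that for each real root $\beta$ of nonzero defect, exactly one of $\beta$ and $-\beta$ becomes, after adding a suitable multiple of $\delta$, the dimension vector of a preprojective or preinjective module. This holds because positive real roots of nonzero defect are all realized by indecomposables, which are then rigid, since $\mathrm{Ext}^1(X,X)$ is dual to $\mathrm{Hom}(X,\tau X)=0$ outside the tubes.

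For defect-zero roots $\alpha+m\delta$ we instead pick $m$ with $0<\alpha+m\delta<\delta$ (or $\alpha$ itself, if it is already $<\delta$). This is the dimension vector of a regular exceptional module in a tube, of quasi-length less than the tube rank. So some root in each class $\pm\alpha+\ZZ\delta$ is Schur. This class-wise existence is the step I expect to be the main obstacle, and I would handle it by this defect split.

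\emph{The direction $\delta_v\neq\delta_w\Rightarrow$ unbounded.} Fix any finite root $\alpha$ whose class $\alpha+\ZZ\delta$ has nonzero defect. For instance, $\alpha=$ a simple root $e_i$ with $\langle\delta,e_i\rangle\ne0$ works, since the defect is not identically zero. The preprojectives $\tau^{-k}P$ then have dimension vectors $\beta+k'\delta$ with $k'\to\infty$ along finitely many residue classes. Indeed, the Coxeter transformation acts on roots of fixed defect by a translation by a multiple of $\delta$ up to finite periodicity. Their differences grow like $k'(\delta_v-\delta_w)$, so they are unbounded.
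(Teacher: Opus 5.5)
Your proposal is correct and follows essentially the paper's argument. Boundedness comes from writing every $c$-vector as $\pm(\alpha+m\delta)$, unboundedness from the infinitely many transjective (hence exceptional) roots whose finite part ranges over a finite set, and sharpness from realizing the maximizing class by a root lying between $0$ and $\delta$, which is Schur by the same defect split you describe (the paper spells this out in Lemma~\ref{lem:box}).

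Two small points. For $\beta$ of nonzero defect, \emph{both} $\beta+m\delta$ and $-\beta+m\delta$ are transjective for $m\gg0$, so ``exactly one'' should read ``each''. Also, the maximizing positive finite root already satisfies $0<\alpha\le\theta<\delta$, so you can take $m=0$ directly and skip the search along the orbit.
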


In type $\widetilde{A}_n$ the null root is $\delta = (1, \dots, 1)$, so
\emph{every} pair is banded, with $b_{vw} = 1$. These are the cluster
algebras of the annulus \cite{FominShapiroThurston2008}. We work throughout
with the \emph{source-sink orientation} of the $(n+1)$-cycle: an acyclic
orientation with a \emph{unique} source $s$ and a \emph{unique} sink $t$,
which divide the cycle into two oriented paths $s \to t$ of lengths $p$ and
$q$ with $p + q = n + 1$. \textup{(}Not every acyclic orientation of the
cycle is of this form: the alternating orientation has several sources and
sinks, and a different filling pattern; we return to this seed-dependence in
Remark~\ref{rem:seed_dependence}.\textup{)} In the surface model the two arcs
are the two boundary components, carrying $p$ and $q$ marked points, and the
two non-homogeneous tubes have ranks $p$ and $q$. Our main result resolves
the surjectivity question completely for this seed.

\begin{theorem}[Projection-filling]
\label{thm:main}
Let $Q$ be a source-sink orientation of the $(n+1)$-cycle, with unique source
$s$ and unique sink $t$. Then:
\begin{enumerate}[label=\textup{(\arabic*)}]
\item every coordinate difference satisfies $|c_v - c_w| \leq 1$, so the
  band is $\{|c_v - c_w| \leq 1\}$;
\item for every pair $(v, w)$ other than $(s, t)$, the projection
  $\pi_{vw}$ surjects onto the full band;
\item for the source-sink pair, $\pi_{st}(C(Q))$ is the band with the
  diagonal $\{c_s = c_t\}$ removed except for a bounded segment; the
  diagonal carries only the regular part of $C(Q)$.
\end{enumerate}
In particular the source-sink diagonal $\{c_s = c_t\}$ is the unique
non-filled band line.
\end{theorem}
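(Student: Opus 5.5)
The plan is to make everything explicit through the description of real roots of $\widetilde{A}_n$, and to identify the Auslander--Reiten defect with the coordinate difference $c_s-c_t$.

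\emph{Real roots, and part (1).} Every real root of $Q$ has the form $\beta = k\delta \pm \chi_I$, where $\delta=(1,\dots,1)$, $k\in\ZZ$, and $\chi_I$ is the indicator vector of a proper nonempty cyclic interval $I$ of the $(n+1)$-cycle. Indeed, these are $k\delta+\alpha$ with $\alpha$ a root of the finite $A_n$ obtained by deleting a vertex. Such a $\beta$ is positive exactly when $k\ge 0$ with the sign $+$, or $k\ge 1$ with the sign $-$. In particular $|\beta_v-\beta_w|\le 1$ for all $v,w$. The bound is attained, for instance by a simple root, so the band is $\{|x-y|\le 1\}$. This is also the case $b_{vw}=1$ of Theorem~\ref{thm:band_dichotomy}.

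\emph{The defect, and part (3).} The defect is $\partial(x)=\langle \delta,x\rangle$, computed with the Euler form:
\[
  \partial(x)=\sum_i x_i-\sum_{a\colon i\to j}x_j=\sum_j x_j\bigl(1-\indeg(j)\bigr).
\]
In the source-sink orientation we have $\indeg(s)=0$ and $\indeg(t)=2$, while every other vertex has in-degree $1$. Hence
\[
  \partial(x)=x_s-x_t .
\]
Consequently the diagonal $\{c_s=c_t\}$ is exactly the set of regular ($\partial=0$) roots. Now recall the standard facts about tame hereditary algebras (Dlab--Ringel), which I take as the input I rely on.
\begin{enumerate}[label=\textup{(\roman*)}]
\item The positive real roots with $\partial\ne 0$ are precisely the dimension vectors of preprojective and preinjective indecomposables. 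These modules are exceptional, so all such roots are Schur.
\item The regular exceptional modules lie in the two non-homogeneous tubes, of ranks $p$ and $q$, with quasi-length smaller than the rank. Their dimension vectors are $0/1$ vectors strictly below $\delta$.
\end{enumerate}
By (ii), the points of $\pi_{st}(C(Q))$ on the diagonal lie in $\{(0,0),(\pm1,\pm1)\}$, which is a bounded segment; this gives (3). Checking that the literature statements apply to this seed, rather than the combinatorics, is the step I expect to need most care. The same two facts also feed the next step.

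\emph{Filling, and part (2).} By (i), it suffices to realize every band point $(x,y)$ by $\pm\beta$ with $\beta$ a real root and $\beta_s\ne\beta_t$. By negation we may assume $x\ge 0$, and if $x=0$ also $y\ge 0$. Write $m=\min(x,y)$, so that $m\ge 0$, and consider three cases.
\begin{itemize}
\item If $x=y=m$, take $\beta=m\delta+\chi_I$ with $I\cap\{v,w\}=\emptyset$ and $|I\cap\{s,t\}|=1$. Since $\{v,w\}\neq\{s,t\}$, one of $I=\{s\}$ or $I=\{t\}$ works.
\item If $x=m+1$ and $y=m$, take $\beta=m\delta+\chi_I$ with $v\in I$, $w\notin I$, and $I$ meeting $\{s,t\}$ in exactly one vertex.
\item If $x=m$ and $y=m+1$ (which forces $m\ge 0$, given our normalization), take $\beta=(m+1)\delta-\chi_I$ with the same kind of interval $I$, containing $v$ and avoiding $w$. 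This $\beta$ is positive.
\end{itemize}
It remains to find such an $I$ in the last two cases. Let $P$ be the path obtained by deleting $w$ from the cycle; the candidate sets $I$ are exactly the subintervals of $P$ containing $v$.
\begin{itemize}
\item If $v\in\{s,t\}$, take $I=\{v\}$; this works because $w$ is not the other member of $\{s,t\}$.
\item If $w\in\{s,t\}$, take the segment of $P$ from $v$ to the remaining element of $\{s,t\}$.
\item Otherwise, let $I$ be the segment of $P$ from $v$ to whichever of $s,t$ lies nearer to $v$ along $P$. This segment contains only that one of $s,t$; if $s,t$ lie on opposite sides of $v$, either choice works.
\end{itemize}
This proves (2). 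The final uniqueness claim follows by combining (2) and (3) with the fact that the off-diagonal lines of $\pi_{st}$ are also filled. That holds by the same construction: for instance, $I=\{s\}$ gives the points $(m+1,m)$, and $(m+1)\delta-\chi_{\{s\}}$ gives the points $(m,m+1)$.
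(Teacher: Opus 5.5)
Your proof is correct and follows essentially the same route as the paper. You identify the defect with $x_s - x_t$, fill every line other than the source-sink diagonal by $\delta$-shifts of a transjective interval indicator, and confine that diagonal to the finite regular part. The differences are cosmetic: your point-by-point normalization using $(m+1)\delta - \chi_I$ does the job of the paper's sign observation, and your path obtained by deleting $w$ replaces the paper's two arcs $I_s, I_t$. One harmless slip: when $v \in \{s,t\}$ the singleton $\{v\}$ works for every $w \neq v$, so the stated reason (that $w$ is not the other member of $\{s,t\}$) is not what makes it work.
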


The mechanism is transparent once the right invariant is identified. The
defect form of the tame hereditary algebra $kQ$, which separates
preprojective, regular, and preinjective modules, turns out to be the
\emph{coordinate difference of source and sink},
\[
  \partial(x) \;=\; x_s - x_t,
\]
a one-line consequence of the source having in-degree $0$ and the sink
in-degree $2$ in the cycle (Lemma~\ref{lem:defect}). Consequently an arc
is transjective exactly when it contains exactly one of $\{s, t\}$, and
the source-sink diagonal $\{c_s = c_t\}$ is exactly the defect-zero
locus, where only the finitely many regular real Schur roots live. Every
other line is reached by a transjective root, whose $\delta$-shifts sweep
the entire line; the realizers are supplied by two short constructions
(Section~\ref{sec:main}).

This gives the result a meaning beyond the combinatorics
(Section~\ref{sec:defect_meaning}). The defect is the canonical bounded
invariant whose zero-level is the finite set of exceptional regular roots
(Proposition~\ref{prop:defect_obstruction}). On the \emph{diagonal} this is
sharp: a banded pair has a non-filling diagonal exactly when the defect is
the coordinate difference $x_v - x_w$, which across all acyclic affine
types happens only for the source-sink pair of $\widetilde{A}_n$ in its
source-sink orientation (Corollary~\ref{cor:diagonal_classification}); there
the three band lines
of $\pi_{st}$ are the preprojective, regular, and preinjective strata
(Theorem~\ref{thm:defect_detection}). Off the diagonal the picture is
richer, and it is seed-dependent. In $\widetilde{D}_4$ the defect
$x_1 + x_2 - x_3 - x_4$ is not a coordinate difference and every coordinate
projection still fills (Example~\ref{ex:D4_meaning}); but in
$\widetilde{E}_7$ and $\widetilde{E}_8$ a banded pair can fail to fill at its
\emph{outer} band lines, which the regular exceptional $c$-vectors reach but
the transjective ones do not (Theorem~\ref{thm:E_classification}). So
non-filling pairs are not confined to $\widetilde{A}_n$, and arise from two
distinct mechanisms: an interior gap at the defect diagonal, and a boundary
extension by regular roots. Both we classify completely, over every affine
type and every acyclic orientation. The interior gap is settled by
Theorem~\ref{thm:delta_one}: every banded pair of null-root coefficient $1$
fills, except the source-sink diagonal of $\widetilde{A}_n$. Boundary
extension can occur only at coefficient at least $2$
(Lemma~\ref{lem:regular_bound}), and the classification there
(Theorems~\ref{thm:Dtilde_classification}, \ref{thm:E_classification},
and~\ref{thm:complete_classification}) takes a strikingly uniform form: the
band exceeds the transjective range only at pairs whose extremal band
lines are reached by a \emph{unique} real root, and such a pair fails to
fill precisely when the geodesic joining it is \emph{balanced}; i.e.,
carries equally many arrows in each direction. The failing pairs are the
distance-two spine pairs of $\widetilde{D}_n$ ($n \geq 6$), the pair
$(3,6)$ of $\widetilde{E}_7$, and the pairs $(1,6)$ and $(3,7)$ of
$\widetilde{E}_8$; the remaining unique-root pair, $(2,5)$ of
$\widetilde{E}_8$, has a geodesic of odd length and never fails. The
filling pattern is thus seed-dependent throughout: in type
$\widetilde{D}_n$ with $n \geq 6$ all but $32$ of the $2^{n}$ acyclic
orientations have a non-filling pair, as do $48$ of the $128$ orientations of
$\widetilde{E}_7$ and $140$ of the $256$ orientations of $\widetilde{E}_8$, while
$\widetilde{D}_4$, $\widetilde{D}_5$, and $\widetilde{E}_6$ fill
completely for every orientation.

The squarefree Pr\"ufer $4$-cycle on $\{1, 2, 3, 6\}$, an acyclic
orientation of the $4$-cycle of affine type $\widetilde{A}_3$, is the
smallest nontrivial instance; there the source-sink pair is $(1, 6)$,
and Theorem~\ref{thm:main} recovers by hand the projection structure we
work out in Section~\ref{sec:example}.

\medskip\noindent\textbf{Conventions.}\;
All quivers are finite, connected, and without loops or $2$-cycles. We
work over an algebraically closed field $k$, and identify a real root
with the dimension vector of the corresponding exceptional module. For a
subset $I \subseteq Q_0$ we write $\alpha_I = \sum_{i \in I} e_i$ for its
indicator vector.

\section{Preliminaries}
\label{sec:prelim}

In this section, we recall the facts about acyclic cluster algebras and
tame hereditary algebras that we use. We refer to
\cite{FominZelevinsky2007} for cluster algebras with principal
coefficients and $c$-vectors, to \cite{NajeraChavez2013, GHKK2018} for the
identification of $c$-vectors with real Schur roots, and to
\cite{Ringel1984, Kac1990} for the representation theory of tame
hereditary algebras and affine root systems.

\subsection*{\texorpdfstring{$c$-vectors}{c-vectors} and real Schur roots}
The $c$-vectors of $\cA(Q)$ at a seed $t$ are the columns of the $c$-matrix
$C^t$, which tracks the principal coefficients. By sign-coherence
\textup{(}\cite[Corollary~5.5]{GHKK2018}; for the skew-symmetric case,
Derksen--Weyman--Zelevinsky \cite{DWZ2010}\textup{)} each $c$-vector is
nonnegative or nonpositive, and by N\'ajera Ch\'avez
\cite[Theorem~1.3]{NajeraChavez2013} the positive $c$-vectors of an acyclic
cluster algebra are exactly the real Schur roots of $Q$, that is, the
dimension vectors of the exceptional (rigid, indecomposable) $kQ$-modules.
Hence $C(Q)$ is the set of signed real Schur roots. The $c$-vectors of affine
and tame type are thus governed by the tame hereditary representation theory
of $Q$ \cite{Ringel1984}; we use this dictionary throughout. To the best of
our knowledge the coordinate projections of the configuration $C(Q)$, and the
filling question studied here, have not been considered before.

\subsection*{Type classification}
By the Fomin--Zelevinsky classification \cite{FominZelevinsky2003II}, an
acyclic cluster algebra is of finite type if and only if the underlying
graph of $Q$ is a Dynkin diagram, and of affine (tame) type if and only
if that graph is a Euclidean diagram. The acyclic orientations of the
$(n+1)$-cycle are exactly the connected quivers of affine type
$\widetilde{A}_n$.

\subsection*{Roots of affine type}
Since $Q$ is acyclic its Euler form is unitriangular and its symmetrized
Tits form is the affine quadratic form, which is positive semidefinite
with one-dimensional radical $\ZZ\delta$ spanned by the null root
$\delta$. The real roots of a simply-laced affine root system are exactly
\[
  \{\, \alpha + m\delta : \alpha \in \Delta_{\mathrm{fin}},\ m \in \ZZ \,\},
\]
where $\Delta_{\mathrm{fin}}$ is the underlying finite root system
\cite[Ch.~6]{Kac1990}. In type $\widetilde{A}_n$ one has
$\delta = (1, \dots, 1)$, and $\Delta_{\mathrm{fin}} = A_n$; the positive
roots of $A_n$ obtained by deleting a vertex of the cycle are the
indicators of cyclic intervals, so the real roots of $\widetilde{A}_n$
are
\[
  \{\, \pm \alpha_I + m\delta : I \text{ a cyclic interval},\ m \in \ZZ \,\}.
\]

\subsection*{Preprojective, regular, preinjective}
For a tame hereditary algebra the indecomposable modules split into three
families: preprojective, regular, and preinjective. The
\emph{transjective} modules are the preprojective and the preinjective
ones; the regular modules lie in tubes, of which all but finitely many
are homogeneous (rank $1$) and the rest are non-homogeneous. In type
$\widetilde{A}_n$ there are two non-homogeneous tubes, of ranks $p$ and
$q$; in types $\widetilde{D}$ and $\widetilde{E}$ there are three. The
defect $\partial$ is the linear form on the
Grothendieck group with $\partial(M) < 0$, $= 0$, $> 0$ according as $M$
is preprojective, regular, or preinjective; a real root is regular if and
only if its defect vanishes \cite{Ringel1984}. Preprojective and
preinjective modules are directing, hence exceptional, so every
transjective real root is a real Schur root. In a tube of rank $r$ the
exceptional regular modules are exactly those of quasi-length less than
$r$, finitely many.

\section{The band-existence dichotomy}
\label{sec:band}

In this section, we prove Theorem~\ref{thm:band_dichotomy}, valid in
finite and affine type.

\begin{proof}[Proof of Theorem~\ref{thm:band_dichotomy}]
If $Q$ is of finite type, then $C(Q)$ is a finite set of roots, so every
coordinate difference is bounded.

Suppose $Q$ is of affine type. By the preliminaries every $c$-vector is
$\pm(\alpha + m\delta)$ with $\alpha \in \Delta_{\mathrm{fin}}$ and
$m \in \ZZ$, whence
\[
  c_v - c_w \;=\; \pm(\alpha_v - \alpha_w) + m\,(\delta_v - \delta_w).
\]
The set $\{\alpha_v - \alpha_w : \alpha \in \Delta_{\mathrm{fin}}\}$ is
finite. If $\delta_v = \delta_w$, the term in $m$ vanishes, so
$|c_v - c_w| \leq \max_\alpha |\alpha_v - \alpha_w| < \infty$. If
$\delta_v \neq \delta_w$, recall that the transjective real roots are
real Schur roots, hence $c$-vectors, and that there are infinitely many of
them, the preprojective and preinjective components each being infinite
\textup{(}\cite[\S3]{Ringel1984}\textup{)}. Each such root is
$\alpha + m\delta$ with $\alpha \in \Delta_{\mathrm{fin}}$, and
$\Delta_{\mathrm{fin}}$ is finite, so $|m|$ is unbounded over them. As
$\delta_v \neq \delta_w$, the difference $c_v - c_w = \pm(\alpha_v -
\alpha_w) + m(\delta_v - \delta_w)$ has bounded $\alpha$-part and unbounded
$m$-term, hence is unbounded. Finally the displayed bound is sharp: after
replacing a finite root by its negative, write it as a positive finite root
$\alpha$; the representatives $\alpha$ and $\delta-\alpha$ both lie
componentwise between $0$ and $\delta$, and are, up to sign, real Schur
roots, and hence $c$-vectors. Since $\delta_v=\delta_w$, they have
opposite coordinate differences, so a root attaining the displayed maximum
realizes $b_{vw}$. This proves the equivalence and the stated value of
$b_{vw}$.
\end{proof}

\begin{example}[The dichotomy in type $\widetilde{D}_4$]
\label{ex:D4}
Let $Q$ be the $4$-subspace quiver: a central vertex $0$ and four leaves
$1, 2, 3, 4$, with any acyclic orientation of the four edges. The null
root is $\delta = (2, 1, 1, 1, 1)$, with the central coordinate equal to
$2$. By Theorem~\ref{thm:band_dichotomy}, the six leaf-leaf pairs are
banded, while the four center-leaf pairs are not. A direct enumeration of
$c$-vectors confirms this: the leaf-leaf differences never exceed $1$,
whereas each center-leaf difference takes arbitrarily large values.
\end{example}

\section{The defect form in type \texorpdfstring{$\widetilde{A}_n$}{An}}
\label{sec:defect}

In this section, we identify the defect form of an acyclic cycle quiver
and deduce the transjective criterion. Fix a source-sink orientation $Q$ of
the $(n+1)$-cycle, with unique source $s$ and unique sink $t$.

\begin{lemma}[Defect and transjective criterion]
\label{lem:defect}
For any acyclic orientation of the $(n+1)$-cycle, with sources $s_1, \dots,
s_k$ and sinks $t_1, \dots, t_k$, the defect form of $kQ$ is, up to sign,
\[
  \partial(x) \;=\; \sum_{i=1}^{k} x_{s_i} - \sum_{i=1}^{k} x_{t_i};
\]
this is a coordinate difference if and only if $k = 1$, that is, the source
and sink are unique, in which case $\partial(x) = x_s - x_t$. Under that
\textup{(}standing\textup{)} hypothesis a cyclic interval indicator
$\alpha_I$ is regular if and only
if $I$ contains both or neither of $\{s, t\}$, and transjective if and
only if $I$ contains exactly one of $\{s, t\}$. Moreover, if $\alpha_I$ is
transjective, then $\alpha_I + m\delta$ is a $c$-vector for every
$m \in \ZZ$, whereas the regular part of $C(Q)$ is finite.
\end{lemma}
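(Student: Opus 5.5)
Our plan is to obtain the defect form from the standard formula $\partial(x) = \langle \delta, x\rangle$ (up to sign), where $\langle -,-\rangle$ is the Euler form of $kQ$:
\[
  \langle x, y\rangle = \sum_{i} x_i y_i - \sum_{a\colon i\to j} x_i y_j .
\]
Since $\delta = (1,\dots,1)$, we get $\langle \delta, x\rangle = \sum_j x_j - \sum_{a\colon i \to j} x_j = \sum_j (1 - \indeg(j))\,x_j$.

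In an acyclic orientation of a cycle every vertex has total degree $2$. Thus $\indeg(j)\in\{0,1,2\}$, according as $j$ is a source, a vertex on an oriented path, or a sink. This gives $\partial(x)=\sum x_{s_i}-\sum x_{t_j}$. The numbers of sources and sinks agree because $\sum_j (1-\indeg j) = (n+1) - (n+1) = 0$ (there are $n+1$ arrows). If we use $\langle x,\delta\rangle$ instead, the out-degree appears and the sign flips, which is why the statement says ``up to sign''.

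The resulting form is a single coordinate difference exactly when $k=1$. Indeed, for $k\ge 2$ it has $2k\ge 4$ nonzero coefficients.

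For the criterion with $k=1$, we evaluate $\partial(\alpha_I) = [s\in I] - [t\in I]$. This is zero exactly when $I$ contains both or neither of $s,t$, and $\pm1$ otherwise. By the preliminaries, a real root is regular if and only if its defect vanishes, and otherwise it is preprojective or preinjective according to the sign of the defect. Note that cyclic intervals here include those of the form $\delta$ minus an interval, since $-\alpha_I+\delta=\alpha_{I^c}$. The regularity question for these is consistent, because $\partial(\delta)=0$.

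For the final claims, let $\alpha_I$ be transjective. Then $\partial(\alpha_I+m\delta)=\partial(\alpha_I)\neq0$, and $\alpha_I+m\delta$ is a real root for every $m$. When $\alpha_I+m\delta$ is positive (all $m\ge 0$), it is a transjective real root, hence directing, hence exceptional, hence a real Schur root and a $c$-vector. When $m<0$, the vector $\alpha_I+m\delta$ is negative, namely $-(\alpha_{I^c}+(|m|-1)\delta)$, where $I^c$ is again a cyclic interval with $\partial(\alpha_{I^c})=-\partial(\alpha_I)\neq 0$. So it is the negative of a transjective positive real root, and it lies in $C(Q)$ because $C(Q)$ is closed under sign. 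If $I$ is the whole vertex set, $\alpha_I=\delta$ is regular, so it is excluded.

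Finiteness of the regular part follows from the preliminaries. Exceptional regular modules lie in non-homogeneous tubes (two of them, of ranks $p,q$) and have quasi-length less than the rank, so there are finitely many. Homogeneous tubes contribute nothing, since their modules have dimension vectors in $\ZZ_{>0}\delta$, which are imaginary.

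The main obstacle is the sign and normalization of the defect, i.e. matching $\langle\delta,-\rangle$ with Ringel's convention. Our approach handles this by proving only the ``up to sign'' statement.
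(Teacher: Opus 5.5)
Your proposal is correct and follows the paper's proof essentially step for step: you compute $\partial=\langle\delta,-\rangle$ through in-degrees on the cycle, read off the criterion from $\partial(\alpha_I)=[s\in I]-[t\in I]$, and use that the defect is invariant under $\delta$-shifts together with the finiteness of exceptional modules in the non-homogeneous tubes. Your explicit treatment of $m<0$, rewriting $\alpha_I+m\delta=-(\alpha_{I^c}+(|m|-1)\delta)$ and using that $C(Q)$ is closed under sign, tidies a step the paper glosses over, since the paper calls every shift the dimension vector of a transjective indecomposable even when it is a negative vector.
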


\begin{proof}
The defect is $\partial(x) = \langle \delta, x \rangle$, where
$\langle x, y \rangle = \sum_{i} x_i y_i - \sum_{i \to j} x_i y_j$ is the
Euler form. Since $\delta = (1, \dots, 1)$,
\[
  \partial(x)
  \;=\; \sum_i x_i - \sum_{i \to j} x_j
  \;=\; \sum_i x_i - \sum_{j} \indeg(j)\, x_j.
\]
In the $(n+1)$-cycle every vertex has degree $2$, so a source has in-degree
$0$, a sink in-degree $2$, and every other vertex in-degree $1$; hence
$\sum_j \indeg(j)\, x_j = \sum_j x_j + \sum_i x_{t_i} - \sum_i x_{s_i}$, and
\[
  \partial(x) \;=\; \sum_i x_i - \Bigl(\sum_j x_j + \sum_i x_{t_i}
  - \sum_i x_{s_i}\Bigr) \;=\; \sum_i x_{s_i} - \sum_i x_{t_i}.
\]
This has $2k$ nonzero coefficients, so it is a coordinate difference exactly
when $k = 1$, giving $\partial(x) = x_s - x_t$; we assume this from now on.
A real root is regular if and only if its defect vanishes
\cite{Ringel1984}; for a $0/1$ indicator $\alpha_I$ this means
$(\alpha_I)_s = (\alpha_I)_t$, that is, $I$ contains both or neither of
$\{s, t\}$. Otherwise $\partial(\alpha_I) = \pm 1$ and $\alpha_I$ is
transjective, which happens exactly when $I$ contains one of $\{s, t\}$.

For the last statement, the defect is unchanged by adding $\delta$, since
$\partial(\delta) = \delta_s - \delta_t = 0$. Thus if $\alpha_I$ is
transjective, then $\alpha_I + m\delta$ has nonzero defect for every $m$,
so it is the dimension vector of a preprojective or preinjective
indecomposable, hence exceptional, hence a $c$-vector. The regular real
Schur roots are the exceptional regular modules; each of the two
non-homogeneous tubes contributes finitely many, and the homogeneous
tubes contain no exceptional modules, so the regular part of $C(Q)$ is
finite, as desired.
\end{proof}

\section{The projection-filling theorem}
\label{sec:main}

In this section, we prove Theorem~\ref{thm:main}. We keep $Q$, $s$, $t$
as above, and we say a band line $\{c_v - c_w = d\}$ is \emph{filled} if
$\pi_{vw}(C(Q))$ contains every integer point of it.

We first record the elementary but decisive observation that a single
transjective root fills its whole line.

\begin{lemma}[Transjective roots fill]
\label{lem:fills}
Let $\alpha_I$ be a transjective cyclic interval and set
$d = (\alpha_I)_v - (\alpha_I)_w$. Then the line $\{c_v - c_w = d\}$ is
filled.
\end{lemma}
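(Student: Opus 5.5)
By the last part of Lemma~\ref{lem:defect}, since $\alpha_I$ is transjective, $\alpha_I + m\delta$ is a $c$-vector for every $m \in \ZZ$, and so are its negatives. Because $\delta = (1,\dots,1)$ in type $\widetilde{A}_n$, adding $\delta$ raises both coordinates by one. Concretely,
\[
  \pi_{vw}(\alpha_I + m\delta) \;=\; \bigl((\alpha_I)_v + m,\ (\alpha_I)_w + m\bigr).
\]
All of these points lie on the line $\{c_v - c_w = d\}$.

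The set $\{((\alpha_I)_v + m, (\alpha_I)_w + m) : m \in \ZZ\}$ is the orbit of one lattice point of the line under translation by $(1,1)$. As $m$ ranges over $\ZZ$, the first coordinate $(\alpha_I)_v + m$ takes every integer value. An integer point of the line is determined by its first coordinate, since the second is then forced to be the first minus $d$. Hence every integer point $(x, x-d)$ equals $\pi_{vw}(\alpha_I + m\delta)$ with $m = x - (\alpha_I)_v$, and the line is filled.

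No real obstacle is expected. The only substantive input is that the full $\delta$-orbit consists of $c$-vectors, including the negative shifts $m<0$. When $\alpha_I + m\delta$ is componentwise negative, it is the negative of a transjective real root, and that root is a $c$-vector by the same part of Lemma~\ref{lem:defect}. This is already covered by the phrase ``for every $m \in \ZZ$'' in Lemma~\ref{lem:defect}, together with $C(Q) = -C(Q)$. I would note it explicitly so the sweep of the line in both directions is justified.
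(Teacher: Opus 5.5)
Your proposal is correct and follows the paper's proof essentially verbatim: invoke the last part of Lemma~\ref{lem:defect} to get $\alpha_I + m\delta \in C(Q)$ for all $m \in \ZZ$, then observe that $\pi_{vw}(\alpha_I + m\delta) = ((\alpha_I)_v + m, (\alpha_I)_w + m)$ sweeps the line. Your explicit treatment of $m < 0$ is a worthwhile addition. There $\alpha_I + m\delta$ is the negative of the transjective root $(\delta - \alpha_I) + (|m|-1)\delta$, and you use $C(Q) = -C(Q)$. This tightens a point that the proof of Lemma~\ref{lem:defect} states loosely, since that proof calls every $\alpha_I + m\delta$ a dimension vector.
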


\begin{proof}
By Lemma~\ref{lem:defect}, $\alpha_I + m\delta \in C(Q)$ for every
$m \in \ZZ$. Its projection is
$\pi_{vw}(\alpha_I + m\delta) = ((\alpha_I)_v + m,\, (\alpha_I)_w + m)$,
which runs over every integer point of $\{c_v - c_w = d\}$ as $m$ varies, as
desired.
\end{proof}

Observe that, by applying the sign $-1$, a transjective root with
difference $d$ also fills the line $\{c_v - c_w = -d\}$. We now produce
transjective realizers for every line except the source-sink diagonal.

\begin{proof}[Proof of Theorem~\ref{thm:main}]
\textbf{(1)}\; By Theorem~\ref{thm:band_dichotomy} with
$\delta = (1, \dots, 1)$, every difference is bounded, and the bound is
$\max_\alpha |\alpha_v - \alpha_w| = 1$ since the finite roots of $A_n$
are signed interval indicators.

\textbf{(2) and the filled lines.}\; Fix a pair $(v, w)$ and a value
$d \in \{-1, 0, 1\}$. By Lemma~\ref{lem:fills} it suffices to exhibit a
transjective cyclic interval $I$ with $(\alpha_I)_v - (\alpha_I)_w = d$,
that is, by Lemma~\ref{lem:defect}, an interval containing exactly one of
$\{s, t\}$ with the prescribed relation to $v, w$.

\emph{The lines $d = \pm 1$.} We treat $d = +1$, where the interval must
contain $v$ and not $w$; the case $d = -1$ is symmetric. If
$v \in \{s, t\}$, the singleton $\{v\}$ contains exactly one of
$\{s, t\}$ and excludes $w \neq v$, as required. If $v \notin \{s, t\}$,
let $I_s$ be the unique arc from $v$ to $s$ avoiding $t$, and $I_t$ the
unique arc from $v$ to $t$ avoiding $s$; each exists because deleting the
avoided vertex opens the cycle into a path containing both endpoints.
Then $I_s$ contains $s$ but not $t$, and $I_t$ contains $t$ but not $s$,
so both are transjective, and both contain $v$. They leave $v$ toward
different neighbors: if they shared a first edge, then walking from $v$
along it we would meet one of $s, t$ before the other, and the arc avoiding
that vertex could not have used the edge. Hence $I_s \cap I_t = \{v\}$.
Since $w \neq v$, at least one of $I_s, I_t$ excludes $w$, and that interval
is the desired realizer.

\emph{The lines $d = 0$.} Here the interval must contain both or neither
of $\{v, w\}$. The singleton $\{s\}$ is transjective, and it contains
neither $v$ nor $w$ precisely when $s \notin \{v, w\}$; likewise $\{t\}$
works precisely when $t \notin \{v, w\}$. If $\{v, w\} \neq \{s, t\}$,
then at least one of $s, t$ lies outside $\{v, w\}$, so the corresponding
singleton fills the line $d = 0$.

Thus for every pair $(v, w) \neq (s, t)$ all three lines $d \in
\{-1, 0, 1\}$ are filled, proving (2); and for the pair $(s, t)$ the
lines $d = \pm 1$ are filled by $\{s\}$ and $\{t\}$.

\textbf{(3) The source-sink diagonal.}\; For the pair $(s, t)$ and
$d = 0$, the condition $(\alpha_I)_s = (\alpha_I)_t$ is the defect-zero
condition, so by Lemma~\ref{lem:defect} no transjective interval realizes
it; only regular roots lie on the diagonal $\{c_s = c_t\}$. The regular
part of $C(Q)$ is finite (Lemma~\ref{lem:defect}), so the diagonal
carries only finitely many $c$-vectors and is not filled. This is the
unique non-filled band line, as desired.
\end{proof}

\begin{remark}[Invariant phrasing]
\label{rem:invariant}
The exceptional locus is intrinsic: it is the defect-zero line of the
source-sink pair,
\[
  \{c_s = c_t\} \;=\; \{\partial = 0\}.
\]
Equivalently, it is the projection direction along which the two tubes,
and only the tubes, are seen. Theorem~\ref{thm:main} thus reads: \emph{the
$c$-vector set of an annulus cluster algebra surjects onto the full
bounded-difference band in every coordinate pair, except along the
defect-zero line of the source-sink pair, where only its regular part
lives.}
\end{remark}

\section{The squarefree Pr\"ufer \texorpdfstring{$4$}{4}-cycle}
\label{sec:example}

In this section, we work out the smallest nontrivial case, the
$\widetilde{A}_3$ cluster algebra of the $4$-cycle, in the orientation
arising from the divisibility order on the squarefree integers with prime
factors in $\{2, 3\}$.

\begin{example}[$\widetilde{A}_{2,2}$]
\label{ex:prufer}
Let $V = \{1, 2, 3, 6\}$, with arrows recording multiplication by a
prime,
\[
  1 \to 2, \qquad 1 \to 3, \qquad 2 \to 6, \qquad 3 \to 6.
\]
The underlying graph is the $4$-cycle in the cyclic order
$(1, 2, 6, 3)$, of affine type $\widetilde{A}_3$. The source is $s = 1$
and the sink is $t = 6$, dividing the cycle into the two arcs $1 \to 2
\to 6$ and $1 \to 3 \to 6$, each of length $2$; the two tubes have rank
$2$. By Lemma~\ref{lem:defect} the defect is $\partial = (\cdot)_1 -
(\cdot)_6$.

The cyclic intervals containing exactly one of $\{1, 6\}$, hence the
transjective indicators, are
\[
  \{1\},\ \{6\},\ \{1, 2\},\ \{2, 6\},\ \{6, 3\},\ \{3, 1\},\
  \{2, 6, 3\},\ \{1, 2, 3\},
\]
while those containing both or neither, the regular indicators, are
\[
  \{2\},\ \{3\},\ \{1, 2, 6\},\ \{1, 3, 6\}.
\]
The four regular indicators are the quasi-simples $e_2, e_3, \delta -
e_3, \delta - e_2$ of the two rank-$2$ tubes. By
Theorem~\ref{thm:main}, every coordinate projection surjects onto
$\{|c_v - c_w| \leq 1\}$ except $\pi_{16}$, whose diagonal $\{c_1 = c_6\}$
is bounded: the only $c$-vectors on it are the four quasi-simples and their
negatives, which project to $c_1 = c_6 \in \{-1, 0, 1\}$.
\end{example}

Figure~\ref{fig:proj} illustrates the two qualitatively different
projections.

\begin{figure}[ht]
\centering
\begin{tikzpicture}[scale=0.62, >=stealth]
  \tikzset{bandline/.style={gray!55, very thin}}
  \begin{scope}
    \draw[bandline] (-2.4,-3.4) -- (3.6,2.6);
    \draw[bandline] (-3.4,-3.4) -- (3.6,3.6);
    \draw[bandline] (-3.4,-2.4) -- (2.6,3.6);
    \draw[->] (-3.4,0) -- (3.8,0) node[right] {\small $c_2$};
    \draw[->] (0,-3.4) -- (0,3.8) node[above] {\small $c_3$};
    \foreach \x in {-3,...,3} {
      \fill (\x,\x) circle (1.7pt);
      \pgfmathtruncatemacro{\xp}{\x+1}
      \ifnum\xp<4 \fill (\xp,\x) circle (1.7pt);\fi
      \pgfmathtruncatemacro{\xm}{\x-1}
      \ifnum\xm>-4 \fill (\xm,\x) circle (1.7pt);\fi
    }
    \node at (0,-4.3) {\small $\pi_{23}$: full band};
  \end{scope}
  \begin{scope}[xshift=10cm]
    \draw[bandline] (-2.4,-3.4) -- (3.6,2.6);
    \draw[red!70!black, dashed, semithick] (-3.4,-3.4) -- (3.6,3.6);
    \draw[bandline] (-3.4,-2.4) -- (2.6,3.6);
    \draw[->] (-3.4,0) -- (3.8,0) node[right] {\small $c_1$};
    \draw[->] (0,-3.4) -- (0,3.8) node[above] {\small $c_6$};
    \foreach \x in {-3,...,3} {
      \pgfmathtruncatemacro{\xp}{\x+1}
      \ifnum\xp<4 \fill (\xp,\x) circle (1.7pt);\fi
      \pgfmathtruncatemacro{\xm}{\x-1}
      \ifnum\xm>-4 \fill (\xm,\x) circle (1.7pt);\fi
    }
    \foreach \x in {-1,0,1} {
      \draw[red!70!black, thick, fill=white] (\x,\x) circle (2.5pt);
    }
    \node at (0,-4.3) {\small $\pi_{16}$: diagonal bounded};
  \end{scope}
\end{tikzpicture}
\caption{Projections of the $c$-vector set of $\widetilde{A}_{2,2}$; the three
band lines $c_v - c_w \in \{-1, 0, 1\}$ are drawn in gray, and a solid dot
marks a filled line. For the non-source-sink pair $(2, 3)$ all three lines are
filled. For the source-sink pair $(1, 6)$ the off-diagonal lines are filled,
but the diagonal $c_1 = c_6$ \textup{(}dashed\textup{)} carries only the
bounded regular part: the three open circles at $c_1 = c_6 \in \{-1, 0, 1\}$.}
\label{fig:proj}
\end{figure}

\begin{remark}[Origin]
\label{rem:origin}
The pair $(\widetilde{A}_{2,2}, (1, 6))$ is the squarefree Pr\"ufer
$4$-cycle of Example~\ref{ex:prufer}, the divisor lattice of $6$ under
divisibility, with the bottom $1$ as the source and the top $6$ as the
sink. Theorem~\ref{thm:main} explains, intrinsically, why the
source-sink direction behaves differently from the others: it is the
defect-zero line of the source-sink pair.
\end{remark}

\section{The defect as the universal obstruction}
\label{sec:defect_meaning}

In this section, we explain what projection-filling means: the obstruction
to surjectivity is, in every acyclic affine type, the Auslander--Reiten
defect, and a coordinate projection exhibits it on its \emph{diagonal}
precisely when the defect happens to be a coordinate difference. In type
$\widetilde{A}_n$ the defect is the coordinate difference
$\partial = (\cdot)_s - (\cdot)_t$, and this is why the source-sink pair
has a non-filled diagonal; in the other affine types the defect is not a
coordinate difference, and no diagonal fails. Off the diagonal the defect
resurfaces at the extremal band lines, where it governs the
boundary-extension failures classified in
Section~\ref{sec:classification}.

We first isolate the filling criterion underlying
Theorem~\ref{thm:main}, in a form valid in every acyclic affine type.

\begin{lemma}[Filling criterion]
\label{lem:filling_criterion}
Let $Q$ be acyclic of affine type and $(v, w)$ a banded pair, that is,
$\delta_v = \delta_w$. A band line $\{c_v - c_w = d\}$ is filled if and
only if some \emph{transjective} finite root $\alpha$ \textup{(}defect
$\partial(\alpha) \neq 0$\textup{)} satisfies $\alpha_v - \alpha_w = d$;
otherwise it carries only finitely many $c$-vectors.
\end{lemma}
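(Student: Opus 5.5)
The plan is to split $C(Q)$ along the defect and use the $\delta$-periodicity of real roots, exactly as in the proofs of Lemma~\ref{lem:fills} and Theorem~\ref{thm:main}(3), but now for an arbitrary acyclic affine $Q$. The two basic facts are as follows.
\begin{itemize}
\item The defect $\partial(x)=\langle\delta,x\rangle$ is linear with $\partial(\delta)=0$. So $\partial(\pm(\alpha+m\delta))=\pm\partial(\alpha)$ depends only on the class of $\alpha$.
\item For a banded pair, $\delta_v=\delta_w$ gives $(\alpha+m\delta)_v-(\alpha+m\delta)_w=\alpha_v-\alpha_w$. So every $\delta$-translate of a root stays on the same band line.
\end{itemize}
By the preliminaries, every $c$-vector is $\pm(\alpha+m\delta)$ with $\alpha\in\Delta_{\mathrm{fin}}$. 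Since $-(\alpha+m\delta)=(-\alpha)+(-m)\delta$ and $-\alpha\in\Delta_{\mathrm{fin}}$, every $c$-vector can be written as $\alpha+m\delta$ with $\alpha\in\Delta_{\mathrm{fin}}$, $m\in\ZZ$.

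\emph{The ``otherwise'' direction.} Suppose no $\alpha\in\Delta_{\mathrm{fin}}$ with $\partial(\alpha)\neq0$ has $\alpha_v-\alpha_w=d$. Take any $c$-vector $\alpha+m\delta$ on the line $\{c_v-c_w=d\}$. Then $\alpha_v-\alpha_w=d$, so $\partial(\alpha)=0$, hence $\partial(\alpha+m\delta)=0$, and the $c$-vector is regular. By the preliminaries, the regular real Schur roots are the exceptional modules of quasi-length less than the rank in the finitely many non-homogeneous tubes; homogeneous tubes contribute none. So only finitely many $c$-vectors lie on the line, and it is not filled.

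\emph{The ``if'' direction.} Let $\alpha$ have $\partial(\alpha)\neq0$ and $\alpha_v-\alpha_w=d$. Then every $\alpha+m\delta$ is a real root of nonzero defect, hence preprojective or preinjective, hence exceptional, hence a $c$-vector. Its projection is $(\alpha_v+m\delta_v,\ \alpha_w+m\delta_v)$. When $\delta_v=1$ this sweeps the whole line, just as in Lemma~\ref{lem:fills}. When $\delta_v=e\geq2$ it only reaches the points with $c_v\equiv\alpha_v \pmod e$.

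\emph{Main obstacle.} The difficulty is covering all residues modulo $e$. I would first try to show that the transjective finite roots with difference $d$ realize every residue of the $v$-coordinate modulo $e$, i.e.\ that the $\delta$-orbits of transjective real roots on the line meet every class. The first tool I would use is the AR translate $\tau$ (Coxeter transformation) acting on a transjective component. It preserves the defect, and it acts on real roots modulo $\delta$ as a finite-order element of the Weyl group. One must then check that it preserves $d$ and shifts the $v$-coordinate by a unit modulo $e$; this invariance of $d$ under $\tau$ is itself unchecked and needs proof. If that fails, I would fall back on a case check over the finitely many affine types with $e\geq2$. That check uses the explicit finite root systems $D$ and $E$ and looks for, say, $\alpha$ and $\alpha+\beta$ with $\beta$ a regular root of difference $0$ and $\beta_v=1$. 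If neither works, I would weaken the conclusion to ``contains infinitely many points'' and record the residue condition as part of the criterion. I expect this residue step to be the only genuinely new input beyond the arguments already given in Lemmas~\ref{lem:defect} and~\ref{lem:fills}.
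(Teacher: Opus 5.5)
You handle the ``otherwise'' direction and the sweep by the transjective roots $\alpha+m\delta$ exactly as the paper does: if no transjective finite root has difference $d$, every $c$-vector on the line is regular, and there are finitely many of these. The gap is the one you isolate yourself, and it cannot be closed. With $e=\delta_v=\delta_w$, the orbit projects to $(\alpha_v+me,\alpha_w+me)$, so it meets only one residue class of $c_v$ modulo $e$. The paper's proof writes this projection as $(\alpha_v+m,\alpha_w+m)$, which silently assumes $e=1$.

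For $e\geq 2$ the ``if'' direction is in fact false. By sign-coherence no $c$-vector projects to $(1,-1)$, so a band line $c_v-c_w=2$ is never filled in the sense of Section~\ref{sec:main}, whatever roots lie on it. Concretely, take $\widetilde{D}_6$ with arrows $\ell_1\to u_1$, $\ell_2\to u_1$, $u_1\to u_2\to u_3$, $u_3\to\ell_3$, $u_3\to\ell_4$, and the banded pair $(u_1,u_3)$, where $\delta_{u_1}=\delta_{u_3}=2$. The finite root $\beta^{*}=e_{\ell_1}+e_{\ell_2}+2e_{u_1}+e_{u_2}$ has difference $2$ and defect $\partial(\beta^{*})=8-6=2\neq 0$, so the lemma asserts that the line $c_{u_1}-c_{u_3}=2$ is filled. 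But every $c$-vector is $\pm(\beta+m\delta)$ with $0\leq\beta\leq\delta$. On this line $\beta_{u_1}-\beta_{u_3}=\pm 2$ with spine coordinates in $\{0,1,2\}$, which forces $\beta_{u_1}\in\{0,2\}$. So $c_{u_1}$ is always even, which is exactly your residue obstruction, realized.

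Consequently neither the Coxeter-translate idea nor a case check can succeed. What your argument does establish is the correct form of the criterion. For $\delta_v=1$ it proves the lemma exactly as stated. In general, the line $c_v-c_w=d$ carries infinitely many $c$-vectors (a full transjective $\delta$-orbit) if some transjective finite root has difference $d$, and only finitely many otherwise. Literal filling holds if and only if the transjective box roots $\beta$ of level $d$ realize every residue of $\beta_v$ modulo $\delta_v$. Here the mirror $\beta\mapsto\delta-\beta$ shows the negative $c$-vectors contribute no new residues, and the regular part is finite. Since such $\beta$ satisfy $d\leq\beta_v\leq\delta_v$ for $d>0$, this condition always fails when $|d|\geq 2$. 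You should record the lemma in one of these corrected forms rather than try to prove it as written. Its later uses at coefficient $\geq 2$ need the same adjustment: Lemma~\ref{lem:low_levels}, and the lines $\pm 2$ in Theorems~\ref{thm:Dtilde_classification} and~\ref{thm:E_classification}.
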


\begin{proof}
If such $\alpha$ exists, then $\partial(\alpha + m\delta) = \partial(\alpha)
\neq 0$ for every $m$, so $\alpha + m\delta$ is a transjective real root,
hence the dimension vector of a preprojective or preinjective
indecomposable, hence exceptional, hence a $c$-vector. Its projection
$\pi_{vw}(\alpha + m\delta) = (\alpha_v + m, \alpha_w + m)$ runs over every
integer point of the line as $m$ varies, since $\delta_v = \delta_w$. So
the line is filled. Conversely, every $c$-vector is $\pm(\alpha + m\delta)$;
if no transjective $\alpha$ realizes the difference $d$, then every
$c$-vector on the line has regular finite-root part ($\partial = 0$), and
the regular real Schur roots are finite in number, so the line carries
only finitely many $c$-vectors, as desired.
\end{proof}

The role of the defect is now visible: a line is unfilled exactly when its
difference value is achieved only on the regular part, and the regular
part is finite. We record this finiteness intrinsically.

\begin{proposition}[The defect is bounded with finite zero-level]
\label{prop:defect_obstruction}
Let $Q$ be acyclic of affine type. The defect $\partial$ is bounded on
$C(Q)$. Its zero-level $\{c \in C(Q) : \partial(c) = 0\}$ is finite,
consisting of the signed exceptional regular roots, while every nonzero
level of $\partial$ on $C(Q)$ is infinite.
\end{proposition}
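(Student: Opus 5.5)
The plan is to treat the defect as the linear form $\partial(x) = \langle \delta, x\rangle$ and to control it through the decomposition of real roots used in the proof of Theorem~\ref{thm:band_dichotomy}.

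\textbf{Boundedness.} Every $c \in C(Q)$ has the form $\pm(\alpha + m\delta)$ with $\alpha \in \Delta_{\mathrm{fin}}$ and $m \in \ZZ$. Since $\delta$ lies in the radical of the symmetrized form, we have $\partial(\delta) = \langle\delta,\delta\rangle = 0$. I would check this directly: $\langle \delta,\delta\rangle$ equals half of $(\delta,\delta)$, and the latter is $0$ because $\delta$ spans the radical of the Tits form. Consequently
\[
  \partial(c) = \pm\partial(\alpha).
\]
The set $\Delta_{\mathrm{fin}}$ is finite, so $|\partial|$ on $C(Q)$ is bounded by $\max_{\alpha \in \Delta_{\mathrm{fin}}} |\partial(\alpha)|$.

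\textbf{Zero level.} Let $c \in C(Q)$ with $\partial(c) = 0$. Up to sign, $c$ is a real Schur root, so it is the dimension vector of an exceptional module $M$. By indecomposability $M$ is preprojective, regular, or preinjective, and the sign of $\partial$ tells these apart. Since $\partial(M) = 0$, the module $M$ is regular. Conversely, every exceptional regular root has defect zero. The zero level is therefore exactly the set of signed exceptional regular roots. This set is finite: homogeneous tubes contain no exceptional modules; in a tube of rank $r$ the exceptional modules are those of quasi-length less than $r$, and there are only finitely many of these; and there are only finitely many non-homogeneous tubes (at most three).

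\textbf{Nonzero levels.} Let $c \in C(Q)$ with $\partial(c) = e \neq 0$, and write $c = \pm(\alpha + m\delta)$. For every $m' \in \ZZ$ the vector $\alpha + m'\delta$ is a real root with defect $\partial(\alpha) \neq 0$, so it is transjective. Being transjective, it is a preprojective or preinjective dimension vector, hence exceptional, hence a $c$-vector; this is the same argument as in Lemma~\ref{lem:filling_criterion}. Each of these vectors with the sign $\pm$ fixed as for $c$ has defect $e$. They are pairwise distinct because $\delta \neq 0$. So the level $\{\partial = e\}$ is infinite.

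The one step that needs care is the justification that a real root with nonzero defect, possibly with negative $m'$, is actually a positive or negative root realized by an indecomposable module. I would handle it by invoking that every real root is either positive or negative, and that for each positive real root $\beta$ there is a unique indecomposable module of dimension vector $\beta$ (Kac's theorem). That module has defect $\partial(\beta) \neq 0$, so it is not regular, hence it is transjective and exceptional. If $\alpha + m'\delta$ is negative, I would pass to its negative, which is again a $c$-vector by the sign symmetry of $C(Q)$.
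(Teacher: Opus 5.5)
Your proof is correct, but two of its three parts use a different argument from the paper's. The paper's argument is Auslander--Reiten theoretic. Since $\partial$ is $\tau$-invariant, it is constant on each of the finitely many transjective $\tau$-orbits, so it takes finitely many values on $C(Q)$. Each nonzero value is then attained along a whole transjective $\tau$-orbit, which is infinite.

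You argue in the root lattice instead. From $\partial(\delta)=\langle\delta,\delta\rangle=0$ and the decomposition $c=\pm(\alpha+m\delta)$ you get $\partial(c)=\pm\partial(\alpha)$. This gives boundedness with the explicit bound $\max_{\alpha\in\Delta_{\mathrm{fin}}}|\partial(\alpha)|$. You fill a nonzero level with the $\delta$-translates $\pm(\alpha+m'\delta)$, realized via Kac's theorem as transjective, hence exceptional, indecomposables. This is the mechanism of Lemma~\ref{lem:filling_criterion}. The zero-level argument is the same in both proofs.

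What your route buys is explicitness and consistency with the rest of the paper. The infinitely many elements of a nonzero level are exactly the $\delta$-shifts that sweep band lines. You also never need that there are finitely many transjective $\tau$-orbits, each of them infinite. What the paper's route buys is intrinsic structure. It needs neither the description of real roots as $\Delta_{\mathrm{fin}}+\ZZ\delta$ nor Kac's theorem, and it shows that each level of $\partial$ is a union of $\tau$-orbits.

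Your treatment of negative $m'$ is exactly the step that needs care, and you handle it correctly: you use that every real root is positive or negative, and that $C(Q)$ is closed under sign.
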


\begin{proof}
The defect is invariant under the Auslander--Reiten translate and is
constant on each of the finitely many transjective $\tau$-orbits, so it
takes finitely many values on $C(Q)$. Its zero-level is the set of regular
$c$-vectors, that is, the signed exceptional regular roots; these lie in the
finitely many non-homogeneous tubes and are finite in number, the
homogeneous tubes containing no exceptional modules. Each nonzero value of $\partial$ is
attained on a full transjective $\tau$-orbit, which is infinite, so the
corresponding level is infinite, as desired.
\end{proof}

Thus $\partial$ is the canonical bounded invariant whose vanishing cuts
out the finite regular part; it is the universal obstruction to filling. A
coordinate projection sees this obstruction exactly when its difference is
the defect.

\begin{theorem}[Projection-filling detects the defect]
\label{thm:defect_detection}
Let $Q$ be acyclic of affine type, and let $(v, w)$ be a banded pair with
$x_v - x_w = \pm\partial$ as linear forms. Then $\pi_{vw}$ fails to fill:
the diagonal $\{c_v - c_w = 0\}$ is the defect-zero locus, and it carries
only the finite regular part of $C(Q)$. In type $\widetilde{A}_n$ the
defect is the coordinate difference $\partial = (\cdot)_s - (\cdot)_t$,
so the hypothesis holds precisely for the source-sink pair $(s, t)$;
there $\partial$ takes only the values $-1, 0, +1$, the band is
$\{|c_s - c_t| \leq 1\}$, the three band lines $c_s - c_t = -1, 0, +1$
are exactly the preprojective, regular, and preinjective strata of
$C(Q)$, and the diagonal is the unique non-filled line. This is the
content of Theorem~\ref{thm:main}.
\end{theorem}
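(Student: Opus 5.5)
The argument splits into a general part, which uses only Lemma~\ref{lem:filling_criterion} and Proposition~\ref{prop:defect_obstruction}, and a type-$\widetilde{A}_n$ part, which uses Lemma~\ref{lem:defect} and Theorem~\ref{thm:main}.

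\emph{General part.} Assume $x_v - x_w = \pm\partial$ as linear forms on $\ZZ^{Q_0}$. Then for every $c \in C(Q)$ we have $c_v - c_w = 0$ exactly when $\partial(c) = 0$. So the diagonal of $\pi_{vw}$ pulls back to the defect-zero level of $C(Q)$. By Proposition~\ref{prop:defect_obstruction}, that level is finite and consists of the signed exceptional regular roots. Alternatively, Lemma~\ref{lem:filling_criterion} gives the same conclusion: a transjective finite root $\alpha$ with $\alpha_v - \alpha_w = 0$ would satisfy $\partial(\alpha) = 0$, which is a contradiction, so the line $d = 0$ is not filled. We also check that the diagonal lies inside the band; this holds because the band is symmetric and contains $0$. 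Hence the diagonal is a band line carrying only finitely many points, and $\pi_{vw}$ fails to fill.

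\emph{Which pairs satisfy the hypothesis in type $\widetilde{A}_n$.} Lemma~\ref{lem:defect} gives $\partial = \pm\bigl(\sum x_{s_i} - \sum x_{t_i}\bigr)$, which has $2k$ nonzero coefficients. A coordinate difference $x_v - x_w$ with $v \neq w$ has exactly two nonzero coefficients, namely $+1$ and $-1$. Comparing coefficients forces $k = 1$ and $\{v, w\} = \{s, t\}$. Conversely, under the source-sink hypothesis the pair $(s, t)$ does satisfy it. The ordered pair $(t,s)$ gives $-\partial$, which is covered by the sign in the hypothesis.

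\emph{Values and strata for the pair $(s,t)$.} First, $\partial$ takes only the values $-1, 0, 1$. This follows either from Theorem~\ref{thm:main}(1), since $|c_s - c_t| \le 1$, or directly: every $c$-vector has the form $\pm(\alpha_I + m\delta)$ with $\partial(\delta) = 0$ and $\partial(\alpha_I) \in \{-1, 0, 1\}$. The level $\partial = 0$ is the regular part. The levels $\pm 1$ consist of signed transjective roots, and the sign convention for $\partial$ (negative on preprojectives, positive on preinjectives) identifies which of these lines is preprojective and which is preinjective. The small point needing care here is signs. A negative $c$-vector $-\beta$ with $\beta$ preprojective has positive defect. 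So the strata should be read as $\pm$ the preprojective and preinjective roots, or, equivalently, the positive $c$-vectors should be taken first and the band then symmetrized. I would state explicitly that this is the meaning of ``strata'' here.

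Finally, Theorem~\ref{thm:main}(2)--(3) shows that the lines $\pm 1$ are filled, by $\{s\}$ and $\{t\}$ together with their $\delta$-shifts. So the diagonal is the unique non-filled line. The main obstacle is only this sign and strata bookkeeping; everything else is a direct combination of the earlier results.
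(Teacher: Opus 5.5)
Your proposal is correct and follows the paper's proof essentially step for step. You use the finiteness of the defect-zero level (Proposition~\ref{prop:defect_obstruction}) for the general part, Lemma~\ref{lem:defect} with the $0/1$ interval indicators to identify the pair and the values $\{-1,0,1\}$, and Theorem~\ref{thm:main} to fill the lines $\pm 1$. Your caveat about the strata corrects something the paper passes over: read literally on the signed set $C(Q)$, the line $c_s - c_t = +1$ contains the preinjective roots together with the negatives of the preprojective ones, so ``strata'' must be understood up to sign, as you say.
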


\begin{proof}
If $x_v - x_w = \pm\partial$, then the line $\{c_v - c_w = 0\}$ is the
defect-zero locus, which by Proposition~\ref{prop:defect_obstruction} is
finite, hence unfilled. In type $\widetilde{A}_n$,
Lemma~\ref{lem:defect} gives $\partial = (\cdot)_s - (\cdot)_t$, and
$x_v - x_w = \pm\partial$ holds for a coordinate pair if and only if
$\{v, w\} = \{s, t\}$. Every indecomposable of $\widetilde{A}_n$ has
dimension vector $\alpha_I + m\delta$ with $\alpha_I$ a $0/1$ indicator,
so $\partial(\alpha_I + m\delta) = (\alpha_I)_s - (\alpha_I)_t \in
\{-1, 0, 1\}$, the three values being the preprojective, regular, and
preinjective defects; the band is $\{|c_s - c_t| \leq 1\}$, and the
off-diagonal lines are filled by Theorem~\ref{thm:main}, as desired.
\end{proof}

The point of Theorem~\ref{thm:defect_detection} is that the elementary
combinatorial question of projection-surjectivity recovers a homological
invariant: in the annulus $\widetilde{A}_n$ the unique non-filling
coordinate pair is the one computing the Auslander--Reiten defect
(Theorem~\ref{thm:main}). Off the diagonal, and in other affine types, the
relationship is looser. The next example shows a type where the defect is
not a coordinate difference and every coordinate projection nonetheless
fills; by contrast, in types $\widetilde{E}_7$ and $\widetilde{E}_8$ (and
in $\widetilde{D}_n$ for most acyclic orientations) a projection can fail to fill off the
diagonal (Theorem~\ref{thm:E_classification}), so the defect being a
coordinate difference is \emph{not} necessary for non-filling.

\begin{example}[The obstruction without a non-filling pair: $\widetilde{D}_4$]
\label{ex:D4_meaning}
Let $Q$ be the affine quiver $\widetilde{D}_4$ with central vertex $0$ and
leaves $1, 2, 3, 4$, in the orientation $1 \to 0$, $2 \to 0$, $0 \to 3$,
$0 \to 4$. The null root is $\delta = (2, 1, 1, 1, 1)$, and the defect,
computed as in Lemma~\ref{lem:defect}, is
\[
  \partial \;=\; x_1 + x_2 - x_3 - x_4,
\]
which is \emph{not} a coordinate difference. By
Theorem~\ref{thm:band_dichotomy} the banded pairs are the six leaf-leaf
pairs, each with bound $1$. A direct enumeration of $c$-vectors shows that
\emph{every} leaf-leaf projection fills its band: there is no non-filling
coordinate pair. Yet the defect remains the obstruction in the sense of
Proposition~\ref{prop:defect_obstruction}: on $C(Q)$ it takes the values
$-2, -1, 0, 1, 2$, with the zero-level finite \textup{(}twelve signed
exceptional regular roots\textup{)} and each nonzero level infinite. The
obstruction has simply migrated off the coordinate axes: it lives in the
linear form $\partial$, which no single coordinate projection isolates
because $\partial$ is not a coordinate difference.
\end{example}

\begin{remark}[The defect, the Coxeter--Jordan invariant, and the divisor lattice]
\label{rem:coxeter_jordan}
In type $\widetilde{A}_n$ the defect $\partial = (\cdot)_s - (\cdot)_t$
also has a Coxeter-theoretic reading: it is, up to sign, the unique
linear invariant of the Coxeter transformation's size-two Jordan block at
eigenvalue $1$ \textup{(}the partner of the null root $\delta$\textup{)}.
For the divisor lattice of Example~\ref{ex:prufer}, where the
vertices are the divisors of a product of two primes, the source is the
bottom $\hat{0} = 1$ and the sink is the top $\hat{1} = 6$ of the divisor
lattice, so the defect is the difference of the extreme coordinates,
$\partial = x_{\hat 0} - x_{\hat 1}$, of multiplicative weight $\log 6 =
\log(2 \cdot 3)$. The maximally composite direction of the lattice and the
Auslander--Reiten defect of the cluster category thus coincide.
\end{remark}

\section{Non-filling pairs across affine types}
\label{sec:classification}

In this section, we go beyond the annulus and classify the non-filling
banded pairs completely, over every acyclic affine quiver and every acyclic
seed. We first settle the diagonal lines in every type
(Corollary~\ref{cor:diagonal_classification}) and, more generally, every
banded pair of null-root coefficient one (Theorem~\ref{thm:delta_one}): such
a pair fills unless it is the source-sink diagonal of $\widetilde{A}_n$. The
remaining pairs have coefficient at least $2$; for these we develop a short
calculus of box roots (Lemmas~\ref{lem:box} through~\ref{lem:flip}), which reduces
every line of every band to the defect of finitely many explicit roots, and
we prove the complete classification
(Theorems~\ref{thm:Dtilde_classification}, \ref{thm:E_classification},
and~\ref{thm:complete_classification}): beyond the source-sink diagonals,
non-filling occurs exactly at the pairs with a unique extremal root whose
geodesic is balanced. The filling pattern is thus seed-dependent:
non-filling occurs for most acyclic orientations in type
$\widetilde{D}_{n \geq 6}$ and for a positive fraction of orientations in
$\widetilde{E}_7$ and $\widetilde{E}_8$, while $\widetilde{D}_4$,
$\widetilde{D}_5$, and $\widetilde{E}_6$ fill for every orientation
(Remark~\ref{rem:generic}).

Throughout, we write $\ell = x_v - x_w$ for a banded pair $(v, w)$. The
arguments rest on the following spanning property of the hyperplane
$\ker\ell = \{x_v = x_w\}$.

\begin{lemma}[Spanning]
\label{lem:spanning}
Let $Q$ be a connected acyclic quiver of affine type, and let $(v, w)$ be a
pair of distinct vertices. Then the real roots lying on $\ker\ell$ span it.
\end{lemma}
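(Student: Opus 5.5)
The plan is to exhibit an explicit basis of $\ker\ell$ consisting of real roots. We work over $\mathbb{Q}$, or even over $\ZZ$, since the argument gives a lattice basis. The hyperplane $\ker\ell = \{x \in \mathbb{Q}^{Q_0} : x_v = x_w\}$ has dimension $|Q_0| - 1$. For $v \neq w$, the lattice $\ker\ell \cap \ZZ^{Q_0}$ has the obvious basis
\[
  \{\, e_u : u \in Q_0 \setminus \{v, w\} \,\} \;\cup\; \{\, e_v + e_w \,\}.
\]
Every simple root $e_u$ is a real root, and $e_u$ lies on $\ker\ell$ exactly when $u \notin \{v, w\}$. So it suffices to find one real root $\beta \in \ker\ell$ with $\beta_v = \beta_w = 1$. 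Such a $\beta$ differs from $e_v + e_w$ by an integer combination of the $e_u$ with $u \neq v, w$, so replacing $e_v + e_w$ by $\beta$ gives again a lattice basis of $\ker\ell \cap \ZZ^{Q_0}$, now consisting entirely of real roots.

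For $\beta$ I would take the indicator $\alpha_P$ of the vertex set of a \emph{geodesic} $P$ from $v$ to $w$ in the underlying graph of $Q$. The steps are as follows.
\begin{enumerate}[label=\textup{(\roman*)}]
\item A shortest path in a graph is an induced path: any extra edge between two of its vertices would give a shorter path. So the full subquiver $Q_P$ on $P$ has underlying graph a path, i.e.\ is of Dynkin type $A_m$ with $m = |P|$.
\item The indicator of all vertices of an $A_m$ quiver is the highest root of $A_m$. Concretely, it is obtained from a simple root by successive simple reflections at the vertices of $P$, since the interval $[i, j]$ reflects to $[i, j+1]$ at $j+1$.
\item Simple reflections at vertices of $P$ are simple reflections of the Weyl group of $Q$. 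Moreover, the symmetrized form of $Q$ restricted to vectors supported on $P$ is that of $Q_P$, because $P$ is a full subquiver. Hence the reflection sequence from (ii), performed in the Weyl group of $Q$, carries $e_{v}$ to $\alpha_P$. Thus $\alpha_P$ is a real root of $Q$, with $(\alpha_P)_v = (\alpha_P)_w = 1$, as required.
\end{enumerate}
In type $\widetilde{A}_n$ this is just the cyclic interval given by the shorter arc, in accordance with the description of real roots in the preliminaries.

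The only genuine point of care is step (i), and specifically the multiply-laced degenerate case. If $v$ and $w$ are joined by a double edge, as in the Kronecker quiver $\widetilde{A}_1$, the induced subgraph on $\{v, w\}$ is not Dynkin. In that case the statement actually fails: the real roots of the Kronecker quiver are $(m+1, m)$ and $(m, m+1)$ up to sign, none of which lies on $\{x_v = x_w\}$. So I would note that the lemma is meant for the setting of the paper, namely simply-laced affine diagrams with at least three vertices; in type $\widetilde{A}_n$ this means $n \geq 2$. There every geodesic is an induced simple path, and the argument above goes through verbatim.
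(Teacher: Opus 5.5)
Your proof is correct and is essentially the paper's. Both keep the simple roots $e_u$ with $u \notin \{v,w\}$ and add the indicator of a geodesic from $v$ to $w$, which is a real root as the highest root of the induced type-$A$ subquiver; your Weyl-group reflection argument makes that step explicit, and the lattice-basis version is a harmless strengthening. Your caveat is also right: the paper's conventions do not exclude the Kronecker quiver $\widetilde{A}_1$, and for it the paper's induced-path claim breaks down and no real root lies on $\{x_v = x_w\}$, so the lemma fails there. This is harmless downstream, since its only pair has $\ell = \pm\partial$ and the lemma is invoked in Proposition~\ref{prop:diagonal_defect} only when $\ell \neq \pm\partial$.
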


\begin{proof}
The hyperplane $\ker\ell$ has dimension $|Q_0| - 1$, and contains the
$|Q_0| - 2$ linearly independent simple roots $e_i$ with $i \neq v, w$. It
therefore suffices to exhibit one real root on $\ker\ell$ outside their span,
that is, a real root $\alpha$ with $\alpha_v = \alpha_w \neq 0$. The
underlying graph of $Q$ is a tree or a cycle, so it contains a geodesic
$v = u_0, u_1, \dots, u_k = w$ whose vertices induce a subquiver of type
$A_{k+1}$ \textup{(}an induced path: a tree has no chords, and a proper arc
of a cycle is a path\textup{)}. The sum of its simple roots,
$\alpha = \sum_{i=0}^{k} e_{u_i}$, is the highest root of that subsystem,
hence a real root of $Q$, and satisfies $\alpha_v = \alpha_w = 1$. Together
with the $e_i$, $i \neq v, w$, it spans $\ker\ell$, as desired.
\end{proof}

\begin{proposition}[The diagonal detects the defect]
\label{prop:diagonal_defect}
The diagonal line $\{c_v - c_w = 0\}$ fills if and only if
$\ell \neq \pm\partial$. Equivalently, the diagonal fails to fill if and only
if the defect is the coordinate difference $\ell$.
\end{proposition}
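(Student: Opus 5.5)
The plan is to use the filling criterion (Lemma~\ref{lem:filling_criterion}): the diagonal $\{c_v=c_w\}$ fills if and only if some real root $\alpha$ with $\ell(\alpha)=0$ has $\partial(\alpha)\neq 0$. We may take $\alpha$ to be a finite root, or any real root on $\ker\ell$. Adding multiples of $\delta$ preserves both $\ell$, since $\delta_v=\delta_w$, and $\partial$, since $\partial(\delta)=0$. So any real root on $\ker\ell$ with nonzero defect yields a transjective finite root realizing $d=0$. The statement therefore reduces to a linear-algebra question: is $\partial$ identically zero on the real roots lying in $\ker\ell$?

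\emph{If $\ell=\pm\partial$.} Every root on $\ker\ell$ has defect zero, so no transjective root realizes $d=0$, and the diagonal is unfilled. This is Theorem~\ref{thm:defect_detection}, and I would simply cite it.

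\emph{If $\ell\neq\pm\partial$.} Suppose, for contradiction, that every real root on $\ker\ell$ has defect zero. By the Spanning Lemma~\ref{lem:spanning}, these roots span $\ker\ell$. Hence $\partial$ vanishes on the whole hyperplane $\ker\ell$, which forces $\partial=\lambda\ell$ for some scalar $\lambda$. Since $\partial$ is a nonzero linear form (it is nonzero on transjective roots), we have $\lambda\neq0$.

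It remains to show $\lambda=\pm1$, which contradicts $\ell\neq\pm\partial$. The defect $\partial=\langle\delta,\cdot\rangle$ is an integral form. On a simple root $e_v$ it takes the value $\partial(e_v)=\lambda$, and on $e_w$ the value $-\lambda$. Simple roots are exceptional, so their defects are integers, and transjective defects are nonzero. I expect this normalization to be the main obstacle. One route is to use that $\partial(e_v)=\delta_v-\sum_{j\to v}\delta_j$ is an integer and that $\lambda\in\mathbb Z$. To get $|\lambda|=1$ I would invoke the standard normalization: the projective $P(i)$ has defect $-\delta_i$ (up to the sign convention), and the gcd of the $\delta_i$ equals $1$, because some $\delta_i=1$. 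Then $\partial(x)=\lambda(x_v-x_w)$ evaluated on the dimension vectors of all projectives gives values in $\{0,\pm\lambda\}$. Since one of these values is $\pm\delta_i=\pm1$ for an extending vertex $i$, we get $|\lambda|=1$.

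Alternatively, the statement can be read up to positive scaling ($\ell$ proportional to $\partial$), in which case the contradiction is immediate. I would phrase the proof to cover the proportional case and then remark that the integrality argument above upgrades it to $\ell=\pm\partial$.
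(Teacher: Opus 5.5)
Your proof is correct and is essentially the paper's argument. The filling criterion reduces the question to whether some real root on $\ker\ell$ has nonzero defect; the Spanning Lemma turns the contrary assumption into $\ker\ell\subseteq\ker\partial$, hence $\partial=\lambda\ell$; and primitivity of $\partial$ forces $\lambda=\pm1$. The only difference is how you certify that primitivity. The paper uses unimodularity of the Euler form applied to the primitive vector $\delta$. You instead get $\lambda=\partial(e_v)\in\ZZ$ and then $\lambda\mid\partial(\underline{\dim}\,P(i))=-\delta_i=-1$ at an extending vertex, which is a concrete instance of the same fact. Only this divisibility is needed, not your unjustified claim that the values lie in $\{0,\pm\lambda\}$.
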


\begin{proof}
By Lemma~\ref{lem:filling_criterion} the diagonal fills if and only if some
transjective root lies on $\ker\ell$. If $\ell = \pm\partial$, then
$\ker\ell = \ker\partial$ contains no transjective root \textup{(}those
have $\partial \neq 0$\textup{)}, so the diagonal does not fill. Suppose
instead $\ell \neq \pm\partial$. Were every real root on $\ker\ell$ regular,
then by Lemma~\ref{lem:spanning} the real roots spanning $\ker\ell$ would all
lie in $\ker\partial$, giving $\ker\ell \subseteq \ker\partial$; as both are
hyperplanes this is an equality, so $\partial = c\,\ell$ for a scalar $c$. But
$\ell$ is primitive, and $\partial = \langle\delta, -\rangle$ is primitive as
well \textup{(}the unimodular Euler form sends the primitive null root
$\delta$ to a primitive coefficient vector\textup{)}, so $c = \pm 1$ and
$\ell = \pm\partial$, a contradiction. Hence some real root on $\ker\ell$ is
transjective, and the diagonal fills, as desired.
\end{proof}

\begin{corollary}[The diagonal classification, all affine types]
\label{cor:diagonal_classification}
A banded coordinate pair $(v, w)$ of a connected acyclic affine quiver $Q$
has a non-filling diagonal if and only if $Q$ is a source-sink orientation of
the $(n+1)$-cycle \textup{(}type $\widetilde{A}_n$\textup{)} and
$\{v, w\} = \{s, t\}$ is its source-sink pair. In particular such a pair is
unique when it exists, and no diagonal fails to fill in types
$\widetilde{D}$, $\widetilde{E}$, or in any cycle orientation with more than
one source.
\end{corollary}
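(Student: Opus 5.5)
By Proposition~\ref{prop:diagonal_defect}, a banded pair $(v,w)$ has a non-filling diagonal exactly when $\ell = x_v - x_w = \pm\partial$ as linear forms. So the plan is to decide, type by type, when the defect $\partial = \langle \delta, -\rangle$ is a coordinate difference. Expanding the Euler form gives the coefficient formula
\[
  \partial_j \;=\; \delta_j - \sum_{i \to j} \delta_i ,
\]
and the task is to determine when this vector has exactly two nonzero entries, equal to $+1$ and $-1$.

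\emph{Type $\widetilde{A}_n$.} Lemma~\ref{lem:defect} already computes $\partial = \sum x_{s_i} - \sum x_{t_i}$. This is a coordinate difference iff $k=1$, and then $\partial = x_s - x_t$. Hence $\ell = \pm\partial$ iff $\{v,w\}=\{s,t\}$. The pair is automatically banded, since $\delta=(1,\dots,1)$.

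\emph{Types $\widetilde{D}$ and $\widetilde{E}$.} Here the graph is a tree with a vertex of degree at least $3$, and I show $\partial$ is never $\pm(x_v - x_w)$ with $\delta_v=\delta_w$. For a leaf $j$ with $\delta_j=1$ (every leaf of $\widetilde{D}_n$, and the extending leaf of $\widetilde{E}$), the neighbour $u$ has $\delta_u = 2$. Hence $\partial_j = 1$ if $j$ is a source and $1-2=-1$ otherwise, so $\partial_j \neq 0$ at every such leaf. $\widetilde{D}_n$ has four such leaves, so $\partial$ has at least four nonzero coefficients; this settles $\widetilde{D}$. In $\widetilde{E}_6$ the three leaves all have $\delta=1$, which already gives three nonzero coefficients.

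For $\widetilde{E}_7$ and $\widetilde{E}_8$ there are fewer $\delta=1$ leaves, so I argue differently, using only the requirement that $\partial$ vanish off $\{v,w\}$. Walking along an arm from a leaf with $\delta=1$, the values of $\delta$ go $1,2,3,\dots$. At an interior arm vertex $j$ with neighbours of $\delta$-values $a-1$ and $a+1$, the coefficient $\partial_j = a - (\text{sum over in-neighbours})$ vanishes iff exactly one neighbour points into $j$; this forces the orientation along the arm. The branch vertex has $\delta$ equal to half the sum of its neighbours' values, so $\partial$ vanishes there iff the in-neighbours carry exactly half that sum. I then check the finitely many remaining cases. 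Since a finite parity/enumeration argument suffices, a direct check over the $2^7$ and $2^8$ orientations is an acceptable fallback.

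The main obstacle is this $\widetilde{E}_7$/$\widetilde{E}_8$ verification: the vanishing constraints propagate along arms but are not immediately contradictory. My first attempt will use a parity count. Since $\sum_j \partial_j$ and $\partial(\delta)=0$ both constrain the two nonzero coefficients, $\delta_v=\delta_w$ together with $\partial = \pm(x_v-x_w)$ forces a local source/sink pattern at two leaves of the same $\delta$-value. I will show this pattern is incompatible with the forced arm orientations.

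Uniqueness then follows, because the source and sink of a source-sink orientation are unique. The final clause (cycles with more than one source) is the $k>1$ case of Lemma~\ref{lem:defect}.
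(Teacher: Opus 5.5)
Your treatment of $\widetilde{A}_n$, $\widetilde{D}_n$ and $\widetilde{E}_6$ is correct and agrees with the paper. However, $\widetilde{E}_7$ and $\widetilde{E}_8$ are not actually proved, and the local step you propose for them is false. Take an arm vertex $j$ with $\delta_j = a$ whose neighbours have values $a-1$ and $a+1$. A single in-neighbour gives $\partial_j = a - (a \mp 1) = \pm 1$, not $0$. The four possible values are $\pm 1$ and $\pm a$, so $\partial_j$ never vanishes there and nothing about the orientation is forced. The criterion ``exactly one arrow in'' comes from the $\widetilde{D}_n$ spine computation, where both neighbours have value $2$; it does not transfer. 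The parity count and the enumeration over $2^7$ and $2^8$ orientations are announced rather than carried out, so as written the argument stops short of these two types.

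The missing idea is that your leaf computation never needed $\delta_j = 1$. Let $j$ be any leaf with unique neighbour $u$. The balance relation $2\delta_j = \sum_{i \sim j}\delta_i$ gives $\delta_u = 2\delta_j$. Hence $\partial_j = \delta_j$ if $j$ is a source, and $\partial_j = \delta_j - \delta_u = -\delta_j$ if $j$ is a sink. A leaf is always one or the other, so $\partial_j \neq 0$ at every leaf, whatever its coefficient. Every $\widetilde{D}$ and $\widetilde{E}$ diagram has at least three leaves, so $\partial$ has at least three nonzero coefficients and cannot be $\pm(x_v - x_w)$. This is the paper's argument. It handles all tree types uniformly, including $\widetilde{E}_7$ (leaf coefficients $2,1,1$) and $\widetilde{E}_8$ (leaf coefficients $3,2,1$). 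Alternatively, the corrected arm computation above would also suffice, since it shows that the interior vertices of the long arms already carry nonzero coefficients.
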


\begin{proof}
By Proposition~\ref{prop:diagonal_defect} a diagonal fails if and only if
$\ell = \pm\partial$; i.e., if and only if $\partial$ is a coordinate
difference. Now
$\partial = \sum_j \partial_j\, x_j$ with
$\partial_j = \delta_j - \sum_{i \to j}\delta_i$ is a coordinate difference
precisely when exactly two coefficients $\partial_j$ are nonzero, equal to
$+1$ and $-1$. For the cycle ($\widetilde{A}_n$), Lemma~\ref{lem:defect}
gives $\partial = \sum_i x_{s_i} - \sum_i x_{t_i}$ over the $k$ sources and
$k$ sinks; this is a coordinate difference if and only if $k = 1$, in
which case
$\partial = x_s - x_t$ is realized by the source-sink pair, while an
orientation with $k \geq 2$ has $2k \geq 4$ nonzero coefficients and no
failing diagonal. For trees we count nonzero coefficients by leaves: at a
leaf $j$ with unique neighbor $k$, the null-root relation $2\delta_j =
\sum_{i \sim j}\delta_i$ gives $\delta_k = 2\delta_j$, so $\partial_j =
\delta_j$ if $j$ is a source and $\partial_j = \delta_j - \delta_k =
-\delta_j$ if $j$ is a sink; thus $\partial_j = \pm\delta_j \neq 0$ at
\emph{every} leaf, in any orientation. Types $\widetilde{D}$ and
$\widetilde{E}$ are trees with at least three leaves \textup{(}four for
$\widetilde{D}_n$, three for $\widetilde{E}_n$\textup{)}, so $\partial$ has
at least three nonzero coefficients and is not a coordinate difference. The
stated classification follows, as desired.
\end{proof}

\begin{remark}[Seed-dependence of the filling pattern]
\label{rem:seed_dependence}
The $c$-vector set $C(Q)$, and with it the filling pattern, depends on the
chosen acyclic seed, not merely on its mutation type. On the $4$-cycle with
vertices $\{0, 1, 2, 3\}$ the source-sink orientation $0 \to 1 \to 2$,
$0 \to 3 \to 2$ has defect $x_0 - x_2$ and the unique non-filling pair
$(0, 2)$ of Theorem~\ref{thm:main}, whereas the alternating orientation
$0 \to 1 \leftarrow 2 \to 3 \leftarrow 0$ has two sources $\{0, 2\}$ and two
sinks $\{1, 3\}$, defect $\partial = x_0 - x_1 + x_2 - x_3$ \textup{(}not a
coordinate difference\textup{)}, and hence no non-filling pair at all, by
Theorem~\ref{thm:delta_one} \textup{(}the diagonal case being
Corollary~\ref{cor:diagonal_classification}\textup{)}. Both are of cluster type
$\widetilde{A}_{2, 2}$, the type of the Pr\"ufer example
\textup{(}Example~\ref{ex:prufer}\textup{)}; so non-filling is a property of
the source-sink seed, not of the annulus cluster algebra in the abstract.
This is why we fix the source-sink orientation throughout.
\end{remark}

We next bound the regular contribution to any band; it confines all further
non-filling to pairs of null-root coefficient at least $2$.

\begin{lemma}[Regular band bound]
\label{lem:regular_bound}
Let $(v, w)$ be a banded pair, so $\delta_v = \delta_w$. Every regular
$c$-vector $c$ satisfies $|c_v - c_w| \leq \delta_v$. In particular a band line
$\{c_v - c_w = d\}$ with $|d| \geq 2$ that is met by a regular $c$-vector
forces $\delta_v = \delta_w \geq 2$.
\end{lemma}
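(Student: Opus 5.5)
We need to prove that every regular $c$-vector $c$ satisfies $|c_v - c_w| \le \delta_v$. The plan is to realize a regular $c$-vector, up to sign, as the dimension vector $\beta$ of an exceptional regular module $M$. By the preliminaries, $M$ lies in a non-homogeneous tube of rank $r$ and has quasi-length less than $r$. The key claim is the componentwise bound $0 \le \beta \le \delta$. Granting it, $\beta_v,\beta_w \in [0,\delta_v]$ because $\delta_v=\delta_w$, hence $|\beta_v-\beta_w|\le\delta_v$. The bound is sign-invariant, so it also covers $-\beta$.

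To prove $\beta\le\delta$ componentwise, I would use the standard structure of a tube of rank $r$. Its quasi-simples $E_1,\dots,E_r$ (one $\tau$-orbit) have dimension vectors summing to $\delta$. An indecomposable regular module of quasi-length $\ell$ with quasi-socle $E_i$ has a filtration by regular submodules whose factors are $E_i, \tau^{-1}E_i, \dots, \tau^{-(\ell-1)}E_i$. Its dimension vector is therefore $\sum_{j=0}^{\ell-1}\underline{\dim}\,\tau^{-j}E_i$.

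When $\ell<r$, these are $\ell$ distinct quasi-simples of the tube. So the sum is a partial sum of the nonnegative vectors $\underline{\dim}E_1,\dots,\underline{\dim}E_r$, whose total is $\delta$. This gives $0\le\beta\le\delta$. I would cite Ringel for the fact that the quasi-simples of a tube sum to $\delta$ and for the quasi-composition factors of a regular module.

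An alternative route avoids tube theory. One could use Theorem~\ref{thm:band_dichotomy}'s description $\beta=\alpha+m\delta$ together with the fact that a real root of defect zero that is a Schur root lies strictly between $0$ and $\delta$. That fact is really the same tube statement, so I prefer the tube argument. The main point needing care is distinctness of the quasi-composition factors when $\ell<r$, which is what keeps the sum below $\delta$. This holds because $\tau$ permutes the $r$ quasi-simples cyclically with period exactly $r$.

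The "in particular" clause then follows directly. If a regular $c$-vector lies on the line $\{c_v-c_w=d\}$ with $|d|\ge 2$, the bound gives $2\le|d|\le\delta_v$, so $\delta_v=\delta_w\ge 2$.
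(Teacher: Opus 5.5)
Your proposal is correct and matches the paper's proof essentially step for step. Both write the dimension vector of the exceptional regular module as a sum of fewer than $r$ consecutive, hence distinct, quasi-simples of a tube whose full period sums to $\delta$. Both then deduce $0 \leq \beta \leq \delta$ and read off $|\beta_v - \beta_w| \leq \delta_v$ from $\delta_v = \delta_w$. The only cosmetic difference is that the paper phrases the upper bound through the complementary run $\delta - \beta$, while you phrase it as a partial sum of nonnegative vectors.
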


\begin{proof}
A regular $c$-vector is, up to sign, the dimension vector of an exceptional
regular module: the positive $c$-vectors are the real Schur roots
(\S\ref{sec:prelim}), a regular one is a real Schur root of vanishing defect,
and such a root is the dimension vector of an exceptional regular
indecomposable. Write the $c$-vector as $\pm\beta$ with $\beta = \underline{\dim}\,M$;
it suffices to prove $0 \leq \beta \leq \delta$ componentwise, since then both
$\beta_v$ and $\beta_w$ lie in $[0, \delta_v]$ (using $\delta_w = \delta_v$),
so $|c_v - c_w| = |\beta_v - \beta_w| \leq \delta_v$.

We recall the structure of the regular components
\textup{(}\cite[\S3]{Ringel1984}\textup{)}. The regular indecomposables lie
in tubes; in a tube of rank $r$ the quasi-simple objects
$S_1, \dots, S_r$, cyclically ordered, are its regular simple modules, and
their dimension vectors satisfy $\sum_{i=1}^{r} \underline{\dim}\,S_i =
\delta$. Every indecomposable of the tube is uniserial for the regular
composition series, determined by its quasi-socle $S_j$ and quasi-length
$m \geq 1$, with quasi-composition factors the $m$ consecutive quasi-simples
$S_j, S_{j+1}, \dots, S_{j+m-1}$ \textup{(}indices modulo $r$\textup{)}; hence
\[
  \beta \;=\; \underline{\dim}\,M \;=\; \sum_{i=0}^{m-1} \underline{\dim}\,S_{\,j+i}.
\]
By the preliminaries the exceptional regular indecomposables are exactly
those of quasi-length $m \leq r - 1$; since $M$ is exceptional we have
$1 \leq m \leq r - 1$, so in particular $r \geq 2$ and the tube is
non-homogeneous. The $m$ summands are then pairwise distinct quasi-simples
\textup{(}being fewer than a full cycle of length $r$\textup{)}, so the
complementary $r - m \geq 1$ quasi-simples form the disjoint consecutive arc
$S_{j+m}, \dots, S_{j+r-1}$, and
\[
  \delta - \beta \;=\; \sum_{i=m}^{r-1} \underline{\dim}\,S_{\,j+i}
\]
is again a sum of dimension vectors of modules, hence nonnegative. Both $\beta$ and $\delta -
\beta$ are therefore nonnegative in every coordinate, that is,
$0 \leq \beta \leq \delta$, as required.

For the last assertion, if a regular $c$-vector $c$ meets the line $c_v - c_w =
d$, then $|d| = |c_v - c_w| \leq \delta_v = \delta_w$ by the bound just
proved, so $|d| \geq 2$ forces $\delta_v = \delta_w \geq 2$, as desired.
\end{proof}

A second ingredient identifies the coefficient-one vertices outside the
cycle.

\begin{lemma}[Coefficient one forces a leaf]
\label{lem:mark_one_leaf}
Let $Q$ be a connected acyclic quiver of affine type whose underlying graph
is a tree. If $\delta_v = 1$, then $v$ is a leaf.
\end{lemma}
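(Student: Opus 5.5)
The plan is to use the null-root relation at each vertex. For a tree of affine type (types $\widetilde{D}_n$, $\widetilde{E}_6$, $\widetilde{E}_7$, $\widetilde{E}_8$; the only affine diagram that is not a tree is the cycle $\widetilde{A}_n$, and $\widetilde{A}_1$ has a double edge rather than a tree), the null root satisfies
\[
  2\delta_u \;=\; \sum_{u' \sim u} \delta_{u'}
\]
for every vertex $u$, since $\delta$ spans the radical of the symmetrized Tits form, whose Cartan matrix is $2I - A$ with $A$ the adjacency matrix. Moreover $\delta$ is sincere with positive integer coordinates, so $\delta_{u'} \geq 1$ for all $u'$.

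Suppose $\delta_v = 1$. The relation at $v$ gives
\[
  \sum_{u' \sim v} \delta_{u'} \;=\; 2.
\]
Each summand is at least $1$, so $v$ has at most two neighbours, and $v$ has at least one neighbour because $Q$ is connected with more than one vertex. If $v$ has exactly one neighbour, then $v$ is a leaf and we are done.

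The remaining case, and the only real step, is to exclude two neighbours $a, b$ with $\delta_a = \delta_b = 1$. I would propagate the relation along the tree. At $a$ we get
\[
  2 \;=\; 2\delta_a \;=\; \delta_v + \sum_{u' \sim a,\, u' \neq v} \delta_{u'} \;=\; 1 + (\text{sum over the other neighbours of } a).
\]
Hence $a$ has exactly one further neighbour, and it has coefficient $1$. Iterating, the component of the tree through $v$ is a path along which every vertex has $\delta = 1$ and degree $2$. A finite tree cannot have all vertices of degree $2$: following the path in one direction must terminate at some vertex of degree at most $1$. At such a terminal vertex $u$ with $\delta_u = 1$ and one neighbour $u'$ with $\delta_{u'} = 1$, the relation reads $2 = 1$, which is a contradiction. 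This is precisely the step where the tree hypothesis is used, since in the cycle $\widetilde{A}_n$ every vertex has $\delta = 1$ and degree $2$.

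Alternatively, one could simply check the four tree families against the standard null-root tables and note that the coefficient-one vertices are exactly the leaves of weight $1$. I would present the propagation argument and mention the table check as confirmation. I expect no real obstacle beyond stating the tree relation $2\delta_u = \sum_{u' \sim u} \delta_{u'}$ cleanly, which is the vanishing of $(2I - A)\delta$, the same relation already used in the proof of Corollary~\ref{cor:diagonal_classification}.
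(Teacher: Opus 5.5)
Your proof is correct and follows essentially the paper's route: the balance relation $2\delta_u=\sum_{u'\sim u}\delta_{u'}$ forces a coefficient-one vertex of degree two to have two coefficient-one neighbours, each of which again has degree two, and this cannot happen in a finite tree. The paper packages this propagation as a $2$-regular induced subgraph on $T=\{u:\delta_u=1,\ \deg u\geq 2\}$, which must contain a cycle; you instead follow a non-backtracking walk until it terminates and derive $2=1$ at the endpoint, which is the same argument.
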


\begin{proof}
The null root spans the radical of the symmetric Tits form, so it satisfies
the balance relation $2\delta_v = \sum_{u \sim v}\delta_u$ at every vertex
$v$. Let $T = \{u : \delta_u = 1,\ \deg u \geq 2\}$; we show $T =
\varnothing$, which is the claim. Suppose $v \in T$. Since $\sum_{u \sim v}
\delta_u = 2\delta_v = 2$ and each $\delta_u \geq 1$, the vertex $v$ has
exactly two neighbors, both of coefficient $1$. For such a neighbor $u$ one
has $\delta_u = 1$; were $\deg u = 1$, its unique neighbor would be $v$ and
the balance relation would read $2 = 2\delta_u = \delta_v = 1$, which is
absurd. So $\deg u \geq 2$ and $u \in T$. Thus every vertex of $T$ has both
of its neighbors in $T$, so the subgraph induced on $T$ is $2$-regular and
contains a cycle. As $Q$ is a tree this is impossible, so $T = \varnothing$
and every coefficient-one vertex is a leaf, as desired.
\end{proof}

With these in hand we settle every banded pair of coefficient one.

\begin{theorem}[Coefficient-one pairs fill]
\label{thm:delta_one}
Let $Q$ be a connected acyclic quiver of affine type, and let $(v, w)$ be a
banded pair with $\delta_v = \delta_w = 1$. Then $\pi_{vw}$ surjects onto the
whole band, unless $Q$ is a source-sink orientation of the $(n+1)$-cycle and
$(v, w)$ is its source-sink pair, in which case the diagonal $\{c_v = c_w\}$
is the only band line that fails to fill.
\end{theorem}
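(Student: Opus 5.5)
Since $\delta_v=\delta_w=1$, Theorem~\ref{thm:band_dichotomy} bounds every difference by $\max_\alpha|\alpha_v-\alpha_w|$. For every finite root $\alpha$ we may choose a representative with $0\le\alpha\le\delta$, so $\alpha_v,\alpha_w\in\{0,1\}$. Hence $b_{vw}\le 1$ and the band consists of the three lines $d\in\{-1,0,1\}$ (with $b_{vw}=1$, since $e_v$ is a real root). By Lemma~\ref{lem:filling_criterion}, filling a line means exhibiting a transjective real root $\alpha$ with $\alpha_v-\alpha_w=d$. The diagonal $d=0$ is settled by Proposition~\ref{prop:diagonal_defect} and Corollary~\ref{cor:diagonal_classification}: it fails exactly for the source-sink pair of a source-sink cycle orientation. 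It remains to fill the lines $d=\pm1$, and by the sign symmetry $c\mapsto -c$ (or swapping $v,w$) it suffices to treat $d=+1$.

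For $d=+1$, consider first the simple root $e_v$. It satisfies $\alpha_v-\alpha_w=1$ and has defect $\partial(e_v)=\partial_v=\delta_v-\sum_{i\to v}\delta_i$. In the tree case, Lemma~\ref{lem:mark_one_leaf} makes $v$ a leaf, and the computation in Corollary~\ref{cor:diagonal_classification} gives $\partial_v=\pm\delta_v=\pm1\neq0$. So $e_v$ is transjective and fills the line $d=+1$; by symmetry $e_w$ fills $d=-1$. In the cycle case $\widetilde{A}_n$ with arbitrary acyclic orientation, $\partial_v=1-\indeg(v)$ is nonzero exactly when $v$ is a source or a sink, and then $e_v$ again works.

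The remaining case, which I expect to be the main obstacle, is a cycle vertex $v$ of in-degree $1$. Here I would generalize the argument in the proof of Theorem~\ref{thm:main}. Walk from $v$ in each of the two directions around the cycle until the first source or sink is reached, giving arcs $I_1,I_2$ with $I_1\cap I_2=\{v\}$. Each $I_j$ is a cyclic interval that is an oriented path, with exactly one endpoint a source or sink and all other vertices of in-degree $1$. By Lemma~\ref{lem:defect} (general form, $\partial=\sum x_{s_i}-\sum x_{t_i}$), we get $\partial(\alpha_{I_j})=\pm1\neq0$, so both are transjective real roots. Since $w\neq v$, at least one of them omits $w$, and that one realizes $\alpha_v-\alpha_w=1$.

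Assembling the cases gives filling of both off-diagonal lines in every configuration. Combined with the diagonal classification, this yields the statement. The only point needing care is verifying that these path intervals are genuine real roots (indicators of cyclic intervals, as in the preliminaries) and that their defect is computed correctly when several sources and sinks are present.
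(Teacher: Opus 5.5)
Your proposal is correct and follows essentially the same route as the paper. You bound the band via representatives $0\le\alpha\le\delta$ and settle the diagonal by Proposition~\ref{prop:diagonal_defect} and Corollary~\ref{cor:diagonal_classification}; in the tree case you use the transjective simples $e_v,e_w$ at leaves, and in the cycle case either the singleton $\{v\}$ or one of the two sub-arcs of the maximal directed arc through $v$. The only cosmetic difference is that you handle the source-sink cycle orientation by this same uniform argument rather than citing Theorem~\ref{thm:main}, whose proof uses exactly those sub-arcs.
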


\begin{proof}
If $Q$ is a source-sink orientation of the $(n+1)$-cycle, this is
Theorem~\ref{thm:main}. If $Q$ is any other orientation of the cycle, it has
$k \geq 2$ sources, so by Lemma~\ref{lem:defect} its defect has $2k \geq 4$
nonzero coefficients and is not a coordinate difference; the diagonal $d = 0$
then fills by Corollary~\ref{cor:diagonal_classification}. For $d = +1$ we
exhibit a transjective cyclic interval containing $v$ but not $w$: if $v$ is a
source or a sink, the singleton $\{v\}$ is such an interval
\textup{(}defect $\pm 1$\textup{)}; otherwise $v$ is interior to a maximal
directed arc bounded by a source $s$ and a sink $t$, and the sub-arcs from $s$
to $v$ and from $v$ to $t$ each contain exactly one of $\{s, t\}$, hence are
transjective, and meet only in $v$, so the one avoiding $w$ is the desired
interval. Its $\delta$-shifts fill $\{c_v - c_w = 1\}$, and the sign $-1$
fills $\{c_v - c_w = -1\}$. Since $\delta = (1, \dots, 1)$, the band is
$\{|c_v - c_w| \leq 1\}$ by Theorem~\ref{thm:band_dichotomy}, so these are all
its lines and every one fills. Finally suppose $Q$ is a tree. Every
positive finite root satisfies $\alpha \leq \theta \leq \delta$, where
$\theta$ is the highest root of $\Delta_{\mathrm{fin}}$ and $\delta =
\theta + e_{v_0}$ at the extending vertex $v_0$, so $|\alpha_v -
\alpha_w| \leq \delta_v = 1$ and the band is $\{|c_v - c_w| \leq 1\}$ by
Theorem~\ref{thm:band_dichotomy}; it remains to treat $d = -1, 0, 1$.

By Lemma~\ref{lem:mark_one_leaf} both $v$ and $w$ are leaves. A leaf is a
source or a sink of $Q$, so its simple module is injective or projective,
hence preinjective or preprojective; in particular the simple root $e_v$ is
transjective \textup{(}$\partial(e_v) = \pm 1 \neq 0$\textup{)}, and likewise
$e_w$. The transjective $c$-vectors $e_v + m\delta$ \textup{(}$m \in
\ZZ$\textup{)} project onto every integer point of $\{c_v - c_w = 1\}$, and
the $e_w + m\delta$ onto every point of $\{c_v - c_w = -1\}$, so both lines
fill. Finally, by the computation in the proof of
Corollary~\ref{cor:diagonal_classification} the defect $\partial$ is a
coordinate difference only for the source-sink pair of the cycle; as $Q$ is a
tree, $\partial \neq \pm(x_v - x_w)$, so the diagonal $d = 0$ fills by
Proposition~\ref{prop:diagonal_defect}. Hence the whole band fills, as
desired.
\end{proof}

In particular, among banded pairs with $\delta_v = \delta_w = 1$ the only
failure is the source-sink diagonal of $\widetilde{A}_n$; this resolves all
of type $\widetilde{A}_n$ together with every leaf-leaf pair of
$\widetilde{D}$ and $\widetilde{E}$. The remaining banded pairs have
$\delta_v = \delta_w \geq 2$, where Lemma~\ref{lem:regular_bound} permits the
regular $c$-vectors to push the band beyond the transjective range. For the
rest of this section the underlying graph of $Q$ is therefore a tree, $(v,
w)$ is a banded pair with $\delta_v = \delta_w \geq 2$, and we write
$\ell(x) = x_v - x_w$ as before. We call a positive real root $\beta$ with
$\beta \leq \delta$ componentwise a \emph{box root} and refer to
$\ell(\beta)$ as its \emph{level}. Observe that the box roots, their levels,
and the banded pairs themselves do not depend on the orientation of $Q$,
whereas the defect does; the classification is exactly the interplay of the
two.

\begin{lemma}[Box representatives]
\label{lem:box}
The box roots are exactly the elements of
$\Delta_{\mathrm{fin}}^{+} \sqcup (\delta - \Delta_{\mathrm{fin}}^{+})$,
and every positive real root is $\beta + m\delta$ for a unique box root
$\beta$ and a unique integer $m \geq 0$. Moreover, for a banded pair
$(v, w)$:
\begin{enumerate}[label=\textup{(\arabic*)}]
\item a band line $\{c_v - c_w = d\}$ is filled if and only if some box
  root $\beta$ with $\partial(\beta) \neq 0$ has level $d$;
\item the set of differences $c_v - c_w$ realized on $C(Q)$ is exactly
  the set of levels of box roots.
\end{enumerate}
\end{lemma}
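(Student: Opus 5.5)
The plan is to prove the structural statement about box roots first, and then read off (1) and (2) from Lemma~\ref{lem:filling_criterion} and Theorem~\ref{thm:band_dichotomy}.

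\textbf{Structure of box roots.} By \S\ref{sec:prelim} every real root is $\alpha + m\delta$ with $\alpha \in \Delta_{\mathrm{fin}}$ and $m \in \ZZ$. Here $\Delta_{\mathrm{fin}}$ is realized on the complement of the extending vertex $v_0$, so each $\alpha$ has $\alpha_{v_0} = 0$.

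Write the positive real root as $\gamma = \alpha + m\delta$ and compare at $v_0$, where $\delta_{v_0} = 1$; this gives $\gamma_{v_0} = m$.
\begin{itemize}
\item If $\alpha > 0$, then $m \geq 0$: if $m < 0$ we would have $\gamma_{v_0} < 0$.
\item If $\alpha < 0$, then $m \geq 1$. Indeed $m = 0$ would make $\gamma$ negative, so we may write $\gamma = (\delta - \alpha') + (m-1)\delta$ with $\alpha' = -\alpha > 0$.
\end{itemize}
In either case $\gamma = \beta + m'\delta$ with $m' \geq 0$ and $\beta \in \Delta^+_{\mathrm{fin}} \cup (\delta - \Delta^+_{\mathrm{fin}})$.

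These $\beta$ satisfy $0 \leq \beta \leq \delta$, using $\alpha \leq \theta \leq \delta$ as in the proof of Theorem~\ref{thm:delta_one}, so they are box roots. Conversely, a box root $\beta$ has $\beta_{v_0} \in \{0, 1\}$, and accordingly $\beta$ or $\delta - \beta$ lies in $\Delta^+_{\mathrm{fin}}$. The union is disjoint because the $v_0$-coordinate is $0$ on one part and $1$ on the other.

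Uniqueness of the decomposition follows because box roots are nonzero and bounded by $\delta$. If $\beta + m\delta = \beta' + m'\delta$ with $m > m'$, then $\beta' = \beta + (m - m')\delta \geq \delta + \beta$, which together with $\beta' \leq \delta$ forces $\beta = 0$, a contradiction.

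\textbf{Proof of (1).} Lemma~\ref{lem:filling_criterion} says the line is filled iff some transjective real root has level $d$. The defect is $\delta$-invariant, and $\ell(\delta) = 0$ since $\delta_v = \delta_w$. Hence level and defect depend only on the box representative of a positive root. For negative roots, apply the sign: the negative of a transjective root is again transjective, with the opposite level. So I need a box root with level $-d$ to give one with level $d$. This is supplied by the involution $\beta \mapsto \delta - \beta$, which swaps the two parts of the disjoint union. It negates the level, since $\ell(\delta) = 0$, and it negates the defect, since $\partial(\delta) = 0$. Therefore the set of levels carried by transjective box roots is symmetric, and a transjective $c$-vector on the line exists exactly when a transjective box root of level $d$ exists.

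\textbf{Proof of (2).} Every $c$-vector is $\pm(\beta + m\delta)$, so it realizes $\pm\ell(\beta)$. The level set is closed under negation by the same involution, so every realized difference is a box-root level. Conversely, I need each box root to contribute its level via some $c$-vector. The issue is that a regular box root need not be Schur. If $\beta$ is transjective, $\beta$ itself is exceptional and hence a $c$-vector. If $\beta$ is regular, I would argue that one of $\beta$ and $\delta - \beta$ is an exceptional regular module: a real root in a tube corresponds to quasi-length not divisible by $r$, and the representative with quasi-length less than $r$ lies in the box. In the proof of Theorem~\ref{thm:band_dichotomy} the author already asserts that both $\alpha$ and $\delta - \alpha$ are Schur, and I would invoke that claim. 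Its negative then realizes the opposite level if needed.

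\textbf{Main obstacle.} The hardest step is the justification that regular box roots are real Schur roots, i.e.\ that every regular real root $\beta$ with $\beta \leq \delta$ is the dimension vector of an exceptional regular module. I would handle it via the tube description used in Lemma~\ref{lem:regular_bound}. A regular real root is the dimension vector of a unique indecomposable in a non-homogeneous tube, of quasi-length $m$ with $r \nmid m$. If $m > r$, its dimension vector is at least $\delta$ plus a nonzero vector, so it cannot lie in the box. Hence the box forces $m < r$, and the module is exceptional.
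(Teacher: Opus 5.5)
Your proof is correct and follows the paper's argument essentially step for step: the decomposition via the extending-vertex coordinate, uniqueness because two box roots cannot differ by a nonzero multiple of $\delta$, the mirror $\beta \mapsto \delta - \beta$ negating both level and defect for (1), and for (2) the tube argument that quasi-length at least $r$ would force $\beta \geq \delta$. Your explicit check that every box root has $\beta_{v_0} \in \{0,1\}$, and your remark that the level set is closed under negation, spell out two points the paper leaves implicit; your final paragraph also makes the hedge ``a regular box root need not be Schur'' unnecessary, since it shows every regular box root is exceptional.
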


\begin{proof}
By the preliminaries the positive real roots are the $\alpha + m\delta$ with
$\alpha \in \Delta_{\mathrm{fin}}^{+}$ and $m \geq 0$, together with the
$-\alpha + m\delta$ with $m \geq 1$; the latter is $(\delta - \alpha) +
(m-1)\delta$. Both $\alpha$ and $\delta - \alpha$ lie in the box: a positive
finite root satisfies $\alpha \leq \theta \leq \delta$, where $\theta$ is
the highest root of $\Delta_{\mathrm{fin}}$ and $\delta = \theta + e_{v_0}$
for the extending vertex $v_0$. The two families are disjoint, since
$\alpha = \delta - \alpha'$ would force $\alpha + \alpha' = \delta$, which
is impossible at the extending vertex, where finite roots vanish; and
$(\beta, m)$ is unique because two box vectors cannot differ by a nonzero
multiple of $\delta$.

For (1), if $\beta$ is a box root of level $d$ with $\partial(\beta) \neq
0$, then every $\beta + m\delta$ is a transjective real root, hence a
$c$-vector, and these sweep the line as in
Lemma~\ref{lem:filling_criterion}. Conversely, a filled line carries a
transjective $c$-vector $\pm(\beta + m\delta)$ with $\beta$ a box root; then
$\beta$ or its mirror $\delta - \beta$ is a box root of level $d$ with
nonzero defect, since $\ell(\delta - \beta) = -\ell(\beta)$ and
$\partial(\delta - \beta) = -\partial(\beta)$.

For (2), every $c$-vector is $\pm(\beta + m\delta)$, so its difference
$c_v - c_w$ is $\pm\ell(\beta)$, a level of a box root; here we use
$\delta_v = \delta_w$. Conversely every box root is, up to sign, a
$c$-vector: a transjective one is exceptional, and a regular one is
exceptional as well, since the unique indecomposable of its dimension
vector lies in a tube of some rank $r$, and quasi-length at least $r$ would
force its dimension vector to be at least $\delta$ (a full period of the
tube sums to $\delta$), contradicting $\beta \leq \delta$, $\beta \neq
\delta$. Hence every level is attained by a $c$-vector, as desired.
\end{proof}

\begin{lemma}[Low levels fill]
\label{lem:low_levels}
For every orientation of $Q$, the lines $c_v - c_w \in \{-1, 0, 1\}$ are
filled; in particular the transjective difference set always contains
$\{-1, 0, 1\}$.
\end{lemma}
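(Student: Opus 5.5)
The diagonal $d = 0$ and the lines $d = \pm 1$ need different arguments. Throughout, $Q$ is a tree with $\delta_v = \delta_w \geq 2$.

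\emph{The diagonal.} By the computation in the proof of Corollary~\ref{cor:diagonal_classification}, the defect of a tree has a nonzero coefficient at every leaf. Types $\widetilde{D}$ and $\widetilde{E}$ have at least three leaves, so $\partial \neq \pm\ell$. Proposition~\ref{prop:diagonal_defect} then shows that the diagonal fills, for every orientation.

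\emph{The line $d = 1$.} By Lemma~\ref{lem:box}(1), it suffices to find a box root of level $1$ with nonzero defect. Suppose for contradiction that every box root of level $1$ is regular. Mirroring $\beta \mapsto \delta - \beta$ negates both $\ell$ and $\partial$. So every box root of level $-1$ is regular as well.

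We now produce many box roots of levels $\pm 1$. For a connected vertex set $X$ of the tree, the indicator $\alpha_X$ has Tits form value $q(\alpha_X) = |X| - (|X| - 1) = 1$ and connected support, so it is a positive real root. If $X$ omits a vertex, then $X$ is a proper subtree, hence of Dynkin type, and $\alpha_X \in \Delta_{\mathrm{fin}}^{+}$. Since every $\delta_i \geq 1$, we also have $\alpha_X \leq \delta$, so $\alpha_X$ is a box root. If $v \in X$ and $w \notin X$, it has level $1$. If $w \in X$ and $v \notin X$, it has level $-1$.

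Let $C_v$ be the component of $Q \setminus w$ containing $v$. Every vertex $i \in C_v$ other than $v$ can be added as a leaf to some connected $X \subseteq C_v$ containing $v$; take $X$ to be the vertex set of the path from $v$ to the neighbour of $i$ on that path. Both $\alpha_X$ and $\alpha_{X \cup \{i\}}$ are regular by assumption, so their difference satisfies $\partial(e_i) = 0$. The root $e_v$ itself gives $\partial(e_v) = 0$. Hence $\partial$ vanishes on every $e_i$ with $i \in C_v$.

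Symmetrically, the level $-1$ subtrees show that $\partial$ vanishes on every $e_i$ with $i$ in the component $C_w$ of $Q \setminus v$ containing $w$. In a tree every vertex lies in $C_v \cup C_w$: the path from $x$ to $v$ either avoids $w$, placing $x$ in $C_v$, or passes through $w$, in which case the path from $x$ to $w$ avoids $v$. So $\partial = 0$ identically. This contradicts the nonzero leaf coefficients found above, so some transjective box root has level $1$.

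\emph{The line $d = -1$.} The mirror of that transjective root has level $-1$ and nonzero defect. Lemma~\ref{lem:box}(1) then fills $\{c_v - c_w = -1\}$.

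The main point needing care is the claim that these subtree indicators are genuine box roots lying in $\Delta_{\mathrm{fin}}^{+}$. This is what lets Lemma~\ref{lem:box} apply. It rests on $q(\alpha_X) = 1$ for a connected subtree together with $\delta \geq (1, \dots, 1)$.
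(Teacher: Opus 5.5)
Your proof is correct and follows the paper's argument essentially step for step: the diagonal via Proposition~\ref{prop:diagonal_defect} and the nonzero leaf coefficients, and the lines $\pm 1$ by contradiction, telescoping $\partial$ along path indicators from $v$ avoiding $w$ and from $w$ avoiding $v$. The paper closes with $\partial = \langle \delta, - \rangle \neq 0$ instead of the leaf coefficients, an immaterial difference.

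One small slip: a path through the extending vertex has its indicator in $\delta - \Delta_{\mathrm{fin}}^{+}$, not in $\Delta_{\mathrm{fin}}^{+}$. This is harmless, since being a box root only requires being a positive real root bounded by $\delta$, and you verify both via $q(\alpha_X) = 1$ and $\delta \geq (1, \dots, 1)$.
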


\begin{proof}
The diagonal fills by Proposition~\ref{prop:diagonal_defect}, since on a
tree the defect is not a coordinate difference (see the proof of
Corollary~\ref{cor:diagonal_classification}). For the lines $\pm 1$,
suppose to the contrary that every box root of level $\pm 1$ has defect
$0$; by Lemma~\ref{lem:box}(1) this is what failure means, and the two
levels are exchanged by the mirror $\beta \mapsto \delta - \beta$. Observe
that the level $+1$ is present in the band, since the simple root $e_v$ is
a box root of level $1$. The indicator $\alpha_P$ of any path $P$ in the
tree is a box root, its support inducing a subdiagram of type $A$; it has
level $+1$ if $P$ contains $v$ and not $w$, and level $-1$ if it contains
$w$ and not $v$. We claim that $\partial(e_u) = 0$ for \emph{every} vertex
$u$. Let $u$ be a vertex whose geodesic from $v$ avoids $w$; this includes
$v$ itself. Writing that geodesic as $v = u_0, u_1, \dots, u_k = u$, each
prefix indicator $\alpha_j = e_{u_0} + \dots + e_{u_j}$ is a path root
containing $v$ and not $w$, so $\partial(\alpha_j) = 0$ for every $j$, and
telescoping gives $\partial(e_{u_j}) = \partial(\alpha_j) -
\partial(\alpha_{j-1}) = 0$ for every $j$; in particular $\partial(e_u) =
0$. If instead the geodesic from $v$ to $u$ passes through $w$, then the
geodesic from $w$ to $u$ avoids $v$, and the same telescoping with
level~$-1$ path roots gives $\partial(e_u) = 0$. Thus $\partial$ vanishes
on every simple root, so $\partial = 0$; but $\partial = \langle \delta,
- \rangle$ is nonzero, the Euler form being unimodular and $\delta \neq
0$, a contradiction, as desired.
\end{proof}

\begin{lemma}[Flip lemma]
\label{lem:flip}
Let $Q'$ be obtained from $Q$ by reversing a single arrow $a \to b$, and
let $\beta$ be any vector. Then
\[
  \partial_{Q'}(\beta) \;=\; \partial_{Q}(\beta) + \delta_a \beta_b -
  \delta_b \beta_a.
\]
In particular the value $\partial(\beta)$ depends only on the orientations
of the edges $\{a, b\}$ with $\delta_a \beta_b \neq \delta_b \beta_a$,
which we call the edges \emph{relevant to} $\beta$.
\end{lemma}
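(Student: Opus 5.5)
The plan is to compute directly from the explicit formula for the defect used throughout the paper, namely $\partial_Q(\beta) = \langle \delta, \beta\rangle_Q$ with the Euler form
\[
  \langle x, y\rangle_Q \;=\; \sum_i x_i y_i - \sum_{i \to j \text{ in } Q} x_i y_j ,
\]
exactly as in the proof of Lemma~\ref{lem:defect}. The null root $\delta$ depends only on the underlying graph, not on the orientation: it spans the radical of the symmetrized Tits form, as recalled in the preliminaries. Hence $Q$ and $Q'$ share the same $\delta$, and only the arrow sum in the Euler form changes.

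First, I would observe that the diagonal term $\sum_i \delta_i\beta_i$ is the same for $Q$ and $Q'$. Second, the arrow sums differ in exactly one term. The arrow $a \to b$ of $Q$ contributes $\delta_a\beta_b$, and the reversed arrow $b \to a$ of $Q'$ contributes $\delta_b\beta_a$; all other arrows contribute identically. Since we work without $2$-cycles and the graph is simple, this is the only edge between $a$ and $b$, so no other term is affected. Subtracting the two expressions,
\[
  \partial_{Q'}(\beta) - \partial_Q(\beta)
  \;=\; -\delta_b\beta_a + \delta_a\beta_b ,
\]
which is the displayed identity.

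For the ``in particular'' clause, I would apply the identity one edge at a time. Any two orientations of the same graph differ by reversing the arrows on some set of edges, so the identity can be iterated. Reversing an edge $\{a,b\}$ with $\delta_a\beta_b = \delta_b\beta_a$ leaves $\partial(\beta)$ unchanged. Moreover, the correction term for each edge does not depend on the orientations of the other edges. Therefore $\partial(\beta)$ is determined by the orientations of the relevant edges alone.

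There is no real obstacle here. The only point needing care is the sign convention: the defect must be taken as $\langle\delta,-\rangle$ rather than $\langle -,\delta\rangle$, consistent with Lemma~\ref{lem:defect}, since the other convention would flip which factor carries $\delta$.
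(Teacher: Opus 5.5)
Your proposal is correct and uses the same argument as the paper: expand $\partial(\beta)=\langle\delta,\beta\rangle=\sum_i\delta_i\beta_i-\sum_{i\to j}\delta_i\beta_j$ and observe that reversing $a\to b$ replaces the single term $-\delta_a\beta_b$ by $-\delta_b\beta_a$. Your extra remarks (that $\delta$ does not depend on the orientation, and the edge-by-edge justification of the ``in particular'' clause) make explicit what the paper leaves implicit; the appeal to the graph being simple is not needed, since the sum runs over arrows and only the reversed arrow's term changes.
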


\begin{proof}
We have $\partial(\beta) = \langle \delta, \beta \rangle = \sum_i \delta_i
\beta_i - \sum_{i \to j} \delta_i \beta_j$, and the reversal replaces the
term $-\delta_a \beta_b$ by $-\delta_b \beta_a$, as desired.
\end{proof}

By Lemma~\ref{lem:box}(2) the differences realized on $C(Q)$ are exactly
the levels of box roots. In
type $\widetilde{D}_n$ every coordinate of $\delta$ is at most $2$, so
every level satisfies $|\ell(\beta)| \leq 2$; in types $\widetilde{E}_6,
\widetilde{E}_7, \widetilde{E}_8$ the same bound holds for every box root
and banded pair, by inspection of the finitely many positive roots
(Remark~\ref{rem:computation}). Combined with Lemma~\ref{lem:low_levels},
the classification is therefore reduced to a single question: for which
pairs and which orientations is the line $c_v - c_w = 2$ (equivalently
$-2$, by the mirror) present in the band but realized by no transjective
class; i.e., when does the defect vanish on \emph{every} box root of level
$2$? We call the box roots of level $2$ the \emph{extremal roots} of the
pair.

We begin with the one infinite family, type $\widetilde{D}_n$, where the
classification is uniform in $n$ and in the orientation.

\begin{theorem}[Type $\widetilde{D}_n$, all orientations]
\label{thm:Dtilde_classification}
Let $Q$ be any orientation of the $\widetilde{D}_n$ diagram, $n \geq 4$:
spine vertices $u_1, \dots, u_{n-3}$ of null-root coefficient $2$, two
leaves attached to $u_1$ and two to $u_{n-3}$. Every banded pair of $Q$
fills, with one family of exceptions: for a spine pair $(u_a, u_{a+2})$ at distance
two \textup{(}so $n \geq 6$\textup{)} whose middle vertex $u_{a+1}$ is a
source or a sink of $Q$, the lines $c_{u_a} - c_{u_{a+2}} = \pm 2$ are not
filled, and carry only the finitely many regular $c$-vectors.
\end{theorem}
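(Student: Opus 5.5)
By Lemma~\ref{lem:box} and Lemma~\ref{lem:low_levels}, the whole question reduces to the extremal level. The lines $c_v - c_w \in \{-1,0,1\}$ always fill, and every level satisfies $|\ell(\beta)| \le 2$ because $\delta \le 2$ coordinatewise. So it suffices to determine, for each banded pair with $\delta_v = \delta_w = 2$, whether some box root of level $2$ has nonzero defect. Such pairs are exactly the spine pairs $(u_a, u_b)$ with $a<b$; leaf pairs were settled by Theorem~\ref{thm:delta_one}. The plan is to enumerate the level-$2$ box roots explicitly and compute their defects with Lemma~\ref{lem:flip}.

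\textbf{Step 1: enumerate the extremal roots.} A box root $\beta$ with $\beta_{u_a} = 2$ and $\beta_{u_b} = 0$ cannot be a finite root of the form $\alpha\le\theta$ supported away from one end unless it is of $D$-type. The $D$-type positive roots have coefficient $2$ on a spine segment $u_a,\dots,u_{c}$ followed by $1$'s to a branch, with both leaves at one end. For $\beta_{u_b}=0$ with $b>a$, the coefficient-$2$ segment must be $u_1,\dots,u_a$. It is followed by a $1$-segment $u_{a+1},\dots,u_{c}$ with $c<b$, and there are two variants: either $c=a$ with nothing after, or the $1$-segment starts only after $a$. In all cases both left leaves carry coefficient $1$. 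Mirrors $\delta-\beta$ of level $-2$ roots supply the same list reflected. Concretely the level-$2$ box roots are
\[
  \beta_c \;=\; e_{\ell_1}+e_{\ell_2}+2\sum_{i\le a}e_{u_i}+\sum_{a<i\le c}e_{u_i}, \qquad a\le c\le b-1,
\]
together with their analogues from the right end (via mirrors). I will double-check this list against $\beta\le\delta$ and $\beta$ real.

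\textbf{Step 2: compute defects.} For $\beta_a$, set $\delta_{u_i}=2$ and $\beta_{u_i}=2$ for $i\le a$, with leaf values $1,1$. Then $\delta_x\beta_y=\delta_y\beta_x$ on every edge inside the support and its leaves. The only relevant edge is $\{u_a,u_{a+1}\}$, where $\delta_{u_a}\beta_{u_{a+1}}-\delta_{u_{a+1}}\beta_{u_a} = -4$. By Lemma~\ref{lem:flip}, $\partial(\beta_a)$ therefore depends only on that edge's orientation. Starting from a reference orientation, it equals a fixed value shifted by $\pm 2$, and these values are $\pm2$ in the two orientations. I expect $\partial(\beta_a)\neq 0$ always, because a box root with a single relevant edge contributing $\mp 2$ cannot be regular. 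If that holds, every pair with $b\ge a+2$ is filled as soon as some $\beta_c$ with $c>a$ exists, so the computation must go through the consecutive chain.

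For $\beta_c$ with $c>a$, the relevant edges are $\{u_a,u_{a+1}\}$ and $\{u_c,u_{c+1}\}$, each with contribution of magnitude $2$. So $\partial(\beta_c)$ is a sum of two terms $\pm1$, and it vanishes precisely when the two edges point in a balanced way, which is the geodesic-balance phenomenon. Comparing over all $c\in[a,b-1]$ and including the right-end family, a single root with nonzero defect suffices to fill. Nonfilling therefore requires every extremal root to be balanced.

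\textbf{Step 3: conclude, and the main obstacle.} If $b\ge a+3$ or $(a,b)$ touches an end, I expect enough extremal roots with independent relevant edges that they cannot all be balanced. For $b=a+2$ the extremal root should be unique, with relevant edges $\{u_a,u_{a+1}\}$ and $\{u_{a+1},u_{a+2}\}$. Its defect vanishes iff these two arrows both point into $u_{a+1}$ or both out of it, that is, iff $u_{a+1}$ is a sink or a source. In that case Lemma~\ref{lem:box}(1) and Lemma~\ref{lem:regular_bound} show the lines $\pm2$ carry only regular $c$-vectors. The main obstacle is Step 1: getting the exact list of level-$2$ box roots right. I will verify it by working in the $D_n$ finite system with the extending vertex chosen as a leaf, so that the level-$2$ roots come from the roots with a $2$ at $u_a$, and checking uniqueness precisely when $b=a+2$.
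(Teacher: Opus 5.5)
There is a genuine gap, and it is exactly the step you flagged: the list of extremal roots in Step~1 is wrong in two ways, and each error breaks the argument.

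First, the case $c=a$ is not a root at all. The vector $\beta_a=e_{\ell_1}+e_{\ell_2}+2\sum_{i\le a}e_{u_i}$ has Tits form value $q(\beta_a)=2$, because a doubled root of $D_n$ needs a \emph{nonempty} $1$-string after its $2$-string. This error is not harmless.
\begin{itemize}
\item Your own Step~2 computation gives $\partial(\beta_a)=\pm2$ in every orientation. So if $\beta_a$ were a box root, it would fill the lines $\pm2$ of every spine pair, contradicting the very exception you want to prove.
\item It would also create a level-$2$ line for adjacent pairs, whose band is only $\{|c_v-c_w|\le1\}$.
\item It contradicts your Step~3 assertion that the extremal root is unique when $b=a+2$.
\end{itemize}

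Second, the $2$-string need not stop at $u_a$. The level-$2$ box roots are exactly
\[
  \beta_{p,q}=e_{\ell_1}+e_{\ell_2}+2(e_{u_1}+\dots+e_{u_p})+(e_{u_{p+1}}+\dots+e_{u_q}),
  \qquad a\le p<q\le b-1 .
\]
Actually computing the mirrors of the right-anchored level-$(-2)$ roots would have produced the members with $p>a$. The mirror of the root with $1$'s on $u_{q'},\dots,u_{p'-1}$ and $2$'s from $u_{p'}$ onward is $\beta_{q'-1,p'-1}$. With the correct family, there is no extremal root for $b=a+1$, and exactly one, $\beta_{a,a+1}$, for $b=a+2$. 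Your analysis of that root is then correct: its relevant edges are the two spine edges at $u_{a+1}$, and its defect vanishes iff $u_{a+1}$ is a source or a sink.

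The omitted roots with $p>a$ are indispensable when $b-a\ge3$. For the family you wrote down, your Step~3 expectation is actually false. Let $u_{a+1}$ be a sink, and let each of $u_{a+2},\dots,u_{b-1}$ have one arrow in and one out. Then $\partial(\beta_{a,a+1})=0$ and
\[
  \partial(\beta_{a,c+1})-\partial(\beta_{a,c})=\partial(e_{u_{c+1}})=0,
\]
so every $\beta_{a,c}$ is regular. Yet the pair fills, because $\partial(\beta_{a+1,a+2})=\partial(\beta_{a,a+2})+\partial(e_{u_{a+1}})=-2$.

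The missing idea is the two-parameter telescoping. Suppose every $\beta_{p,q}$ had defect $0$. Taking differences in $q$ and in $p$ gives
\[
  \partial(e_{u_k})=2-2\cdot\#\{\text{arrows into }u_k\}=0 \quad\text{for all } a<k<b .
\]
Hence $u_{a+1}$ is neither a source nor a sink, so $\partial(\beta_{a,a+1})=\pm2$, a contradiction.

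A minor point: the fact that the unfilled lines carry only regular $c$-vectors follows from Lemma~\ref{lem:box}(1) and Lemma~\ref{lem:filling_criterion}. Lemma~\ref{lem:regular_bound} does not give it.
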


\begin{proof}
The leaf pairs have coefficient $1$ and are settled by
Theorem~\ref{thm:delta_one}, so let $(v, w) = (u_a, u_b)$ with $a < b$ be a
spine pair, and write $d = b - a$. By Lemma~\ref{lem:low_levels} the lines
$0, \pm 1$ fill, and every level is at most $2$ in absolute value, so by
Lemma~\ref{lem:box} it remains to decide when some extremal root has
nonzero defect.

We first list the extremal roots. The positive roots of the finite system
$D_n$ underlying $\widetilde{D}_n$ are the indicators of paths in the
diagram together with the \emph{doubled} roots
\[
  \beta_{p, q} \;=\; e_{\ell_1} + e_{\ell_2} + 2(e_{u_1} + \dots + e_{u_p})
  + (e_{u_{p+1}} + \dots + e_{u_q}),
  \qquad 1 \leq p < q \leq n - 3,
\]
where $\ell_1, \ell_2$ are the two leaves at $u_1$, together with the
analogous roots anchored at the other end, and those whose $1$-string ends
in one far leaf; by Lemma~\ref{lem:box} the box roots are these and their
mirrors $\delta - \beta$. A path indicator has coordinates $0$ and $1$, and
the mirror of a path indicator has spine coordinates $1$ and $2$; neither
attains level $2$. A doubled root anchored at the far end whose $2$-string
covers $u_a$ covers $u_b$ as well, and one whose $1$-string reaches $u_b$
has level at most $1$. Hence an extremal root is some $\beta_{p, q}$ with
$(\beta_{p,q})_{u_a} = 2$ and $(\beta_{p,q})_{u_b} = 0$; i.e., with
$a \leq p < q \leq b - 1$. In particular there is no extremal root when
$d = 1$, and exactly one when $d = 2$, namely $\beta^{*} = \beta_{a, a+1}$.

Suppose $d = 2$. By the flip lemma the only edges relevant to $\beta^{*}$
are the two spine edges at the middle vertex $u_{a+1}$: along the
$2$-string and the $0$-string the products $\delta_x \beta^{*}_y$ and
$\delta_y \beta^{*}_x$ agree, as they do on the four leaf edges
\textup{(}$1 \cdot 2 = 2 \cdot 1$ at the near fork, $0 = 0$ at the
far\textup{)}. For any orientation with $u_a \to u_{a+1} \to u_{a+2}$ a
direct evaluation gives $\partial(\beta^{*}) = 2$, and by
Lemma~\ref{lem:flip} reversing the arrow $u_a \to u_{a+1}$, respectively
$u_{a+1} \to u_{a+2}$, changes the value by $2 \cdot 1 - 2 \cdot 2 = -2$,
respectively $2 \cdot 0 - 2 \cdot 1 = -2$. Hence $\partial(\beta^{*}) =
\pm 2 \neq 0$ when the two spine arrows at $u_{a+1}$ point the same way
along the spine, and $\partial(\beta^{*}) = 0$ when $u_{a+1}$ is a source
or a sink; in the latter case the lines $\pm 2$ are unfilled by
Lemma~\ref{lem:box}(1), and they carry only the finite regular part
(Lemma~\ref{lem:filling_criterion}).

Suppose $d \geq 3$, and suppose to the contrary that every extremal root
$\beta_{p, q}$, $a \leq p < q \leq b - 1$, has defect $0$. Taking
differences,
\[
  \partial(e_{u_{q+1}}) = \partial(\beta_{p, q+1}) - \partial(\beta_{p, q})
  = 0
  \quad\text{and}\quad
  \partial(e_{u_{p+1}}) = \partial(\beta_{p+1, q}) - \partial(\beta_{p, q})
  = 0
\]
for all admissible indices, so $\partial(e_{u_k}) = 0$ for every $k$ with
$a < k < b$. For such an interior spine vertex we have $\partial(e_{u_k}) =
\delta_{u_k} - \sum_{i \to u_k} \delta_i = 2 - 2 \cdot \#\{\text{arrows
into } u_k\}$, which vanishes exactly when $u_k$ has one arrow in and one
arrow out. Thus no $u_k$ with $a < k < b$ is a source or a sink; but then
the $d = 2$ computation gives $\partial(\beta_{a, a+1}) = \pm 2 \neq 0$, a
contradiction. Hence some extremal root is transjective and the lines
$\pm 2$ fill, which finishes the proof.
\end{proof}

We turn to the three exceptional types, where the box roots form a finite,
orientation-independent list and the same calculus applies verbatim.

\begin{theorem}[Types $\widetilde{E}_6$, $\widetilde{E}_7$,
$\widetilde{E}_8$, all orientations]
\label{thm:E_classification}
Let $Q$ be any orientation of a diagram of type $\widetilde{E}_6$,
$\widetilde{E}_7$, or $\widetilde{E}_8$. Then:
\begin{enumerate}[label=\textup{(\arabic*)}]
\item in $\widetilde{E}_6$, every banded pair fills;
\item in $\widetilde{E}_7$, with central vertex $0$, short arm $1$, and
  long arms $2, 3, 4$ and $5, 6, 7$ \textup{(}listed outward\textup{)},
  every banded pair fills except $(3, 6)$: its lines $\pm 2$ are unfilled
  exactly when the geodesic $3, 2, 0, 5, 6$ carries two arrows in each
  direction;
\item in $\widetilde{E}_8$, with central vertex $0$ and arms $1$; $2, 3$;
  $4, 5, 6, 7, 8$ \textup{(}listed outward\textup{)}, every banded pair
  fills except $(1, 6)$ and $(3, 7)$: the lines $\pm 2$ of $(1, 6)$ are
  unfilled exactly when the geodesic $1, 0, 4, 5, 6$ carries two arrows in
  each direction, and those of $(3, 7)$ exactly when the geodesic
  $3, 2, 0, 4, 5, 6, 7$ carries three arrows in each direction.
\end{enumerate}
In every unfilled case the lines $\pm 2$ carry only the finitely many
regular $c$-vectors.
\end{theorem}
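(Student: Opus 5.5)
The plan is to follow the $\widetilde{D}_n$ template exactly, now over the finite, orientation-independent list of box roots of $\widetilde{E}_6$, $\widetilde{E}_7$, $\widetilde{E}_8$. By Theorem~\ref{thm:delta_one} the coefficient-one pairs (leaf pairs with $\delta=1$) are settled. So only banded pairs with $\delta_v=\delta_w\ge 2$ remain, and these I will list from the null roots. In $\widetilde{E}_6$ they are the pairs of arm vertices adjacent to the centre, all with coefficient $2$. In $\widetilde{E}_7$ ($\delta=(4;2;3,2,1;3,2,1)$) they are $(2,5)$, $(3,6)$, $(1,3)$, $(1,6)$, $(2,6)$, $(3,5)$. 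In $\widetilde{E}_8$ ($\delta=(6;3;4,2;5,4,3,2,1)$) they are $(1,6)$, $(3,7)$, $(2,5)$, and similar pairs. By Lemma~\ref{lem:low_levels} the lines $0,\pm1$ always fill, and every level is at most $2$ in absolute value. Hence, by Lemma~\ref{lem:box}(1), the statement reduces to determining, for each pair, the extremal roots (box roots of level $2$) and the orientations for which $\partial$ vanishes on all of them. The final clause, that unfilled lines carry only regular $c$-vectors, is then Lemma~\ref{lem:filling_criterion}.

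The first step is to enumerate the extremal roots for each pair. I will do this by listing the positive roots $\alpha\in\Delta_{\mathrm{fin}}^+$ (and their mirrors $\delta-\alpha$) with $\alpha_v-\alpha_w=\pm2$, using Lemma~\ref{lem:box} to restrict to the box. I expect two outcomes. Either several extremal roots exist whose pairwise differences are simple roots $e_u$ (or short path roots) along the geodesic, or there is a unique one. In the first case I will argue as in the case $d\ge3$ of Theorem~\ref{thm:Dtilde_classification}. If all extremal roots had defect $0$, differences would force $\partial(e_u)=0$, i.e.\ $\delta_u=\sum_{i\to u}\delta_i$, at enough vertices to make some extremal root's defect nonzero by direct evaluation, which is a contradiction. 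Where that telescoping is too short, I will instead use Lemma~\ref{lem:flip} on two extremal roots with disjoint sets of relevant edges, showing that they cannot vanish simultaneously. In $\widetilde{E}_6$ every pair should fall into this case, or have no extremal root at all (then level $2$ is absent from the band and nothing is to prove).

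For the unique-root pairs, $(3,6)$ in $\widetilde{E}_7$ and $(1,6)$, $(3,7)$, $(2,5)$ in $\widetilde{E}_8$, I will compute the unique extremal root $\beta^*$ and apply Lemma~\ref{lem:flip}. Off the geodesic $\delta_x\beta^*_y=\delta_y\beta^*_x$ should hold, so only geodesic edges are relevant. I will evaluate $\partial(\beta^*)$ at one reference orientation, say all geodesic arrows pointing from $v$ to $w$. I then expect each reversal to shift the value by the same amount $-\varepsilon$ ($\varepsilon=1$ or $2$, as in the $\widetilde{D}$ case) with the reference value $k\varepsilon/2$ for geodesic length $k$. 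In that case $\partial(\beta^*)$ is proportional to (forward arrows) $-$ (backward arrows), so it vanishes exactly when the geodesic is balanced. This gives the stated criteria of two-and-two arrows for $(3,6)$ and $(1,6)$ and three-and-three for $(3,7)$. For $(2,5)$ the geodesic $2,3$-side through $0,4,5$ has odd length, so balance, and hence failure, is impossible.

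The main obstacle is the first step: producing a reliable, complete list of level-$2$ box roots for each pair in $\widetilde{E}_7$ and $\widetilde{E}_8$ (120 positive roots for $E_8$), and verifying that every relevant-edge shift along a geodesic really is uniform. I would first try a structural shortcut. Level $2$ forces $\alpha_v\ge2$ and $\alpha_w=0$ (or the mirror), so $\alpha$ is supported on the component of the diagram minus $w$ containing $v$, which is of type $D$ or $E_{6,7}$. The candidates can then be read off from the highest roots of those subdiagrams. Failing that, I would fall back on the finite computer check recorded in Remark~\ref{rem:computation}.
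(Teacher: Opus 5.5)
Your plan is essentially the paper's proof. It uses the same reduction via Theorem~\ref{thm:delta_one}, Lemma~\ref{lem:low_levels} and Lemma~\ref{lem:box} to the level-$2$ box roots. The enumeration then shows that only $(3,6)$ in $\widetilde{E}_7$ and $(1,6)$, $(2,5)$, $(3,7)$ in $\widetilde{E}_8$ have extremal roots, each a unique $\beta^{*}$, so your multi-root fallback is never needed. Finally the flip lemma gives $\partial(\beta^{*}) = c\,(f-b)$ with uniform increments $2c = 2, 3, 4, 2$, not your guessed $\varepsilon \in \{1,2\}$, which is harmless.

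There are two slips to fix. First, $(2,6)$ and $(3,5)$ are not banded in $\widetilde{E}_7$: there $\delta_2 = \delta_5 = 3$ but $\delta_3 = \delta_6 = 2$. Second, your support shortcut assumes a level-$2$ box root has $\beta_v \geq 2$ and $\beta_w = 0$ up to mirror. This misses the configuration $(\beta_2, \beta_5) = (3,1)$ for the $\widetilde{E}_8$ pair $(2,5)$, where $\delta_2 = \delta_5 = 4$, so that case must be excluded separately (for instance, the Tits form is at least $2$ on every integer vector with $\beta_2 = 3$, $\beta_5 = 1$).
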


\begin{proof}
The null roots in the three cases are
\[
  \delta = (3;\, 2, 1;\, 2, 1;\, 2, 1), \qquad
  (4;\, 2;\, 3, 2, 1;\, 3, 2, 1), \qquad
  (6;\, 3;\, 4, 2;\, 5, 4, 3, 2, 1),
\]
the central coordinate listed first. Coefficient-one pairs fill by
Theorem~\ref{thm:delta_one}, and the lines $0, \pm 1$ of every banded pair
fill by Lemma~\ref{lem:low_levels}, so by Lemma~\ref{lem:box} only the
extremal roots of the coefficient-$\geq 2$ pairs are in question.
Enumerating the finitely many box roots, an exact and
orientation-independent computation (Remark~\ref{rem:computation}), yields:
in $\widetilde{E}_6$ no banded pair has an extremal root, so every band is
$\{|c_v - c_w| \leq 1\}$ and fills; in $\widetilde{E}_7$ the only pair with
an extremal root is $(3, 6)$, and in $\widetilde{E}_8$ the pairs with
extremal roots are $(1, 6)$, $(2, 5)$, and $(3, 7)$; each has a unique
one, namely
\begin{gather*}
  \beta^{*}_{(3,6)} = (2; 1; 2, 2, 1; 1, 0, 0), \qquad
  \beta^{*}_{(1,6)} = (3; 2; 2, 1; 2, 1, 0, 0, 0), \\
  \beta^{*}_{(2,5)} = (2; 1; 2, 1; 1, 0, 0, 0, 0), \qquad
  \beta^{*}_{(3,7)} = (4; 2; 3, 2; 3, 2, 1, 0, 0).
\end{gather*}
In each case one checks $\delta_x \beta^{*}_y = \delta_y \beta^{*}_x$ on
every edge $\{x, y\}$ off the geodesic from $v$ to $w$, so by the flip
lemma the edges relevant to $\beta^{*}$ are exactly the geodesic edges.
Writing $f$ and $b$ for the numbers of geodesic arrows pointing with,
respectively against, the walk from $v$ to $w$, the value at one reference
orientation together with the flip increments of Lemma~\ref{lem:flip}
determines the affine function $\partial(\beta^{*})$ completely, and a
direct evaluation gives
\[
  \partial(\beta^{*}) \;=\; c \, (f - b),
  \qquad
  c = 1, \tfrac{3}{2}, 2, 1
  \ \text{ for the pairs }\
  (3,6),\ (1,6),\ (2,5),\ (3,7),
\]
respectively; every flip increment has magnitude $2c$, aligned with the
walk. Hence $\partial(\beta^{*}) = 0$ if and only if $f = b$. For $(2, 5)$
the geodesic $2, 0, 4, 5$ has odd length, so $f \neq b$ for every
orientation and the pair fills always; for $(3, 6)$, $(1, 6)$, and
$(3, 7)$ the geodesics have lengths $4$, $4$, and $6$, and $f = b$ is the
stated criterion. When it holds, the lines $\pm 2$ are unfilled by
Lemma~\ref{lem:box}(1) and carry only the finite regular part
(Lemma~\ref{lem:filling_criterion}), as desired.
\end{proof}

Assembling the pieces yields the complete classification.

\begin{theorem}[The complete classification]
\label{thm:complete_classification}
Let $Q$ be a connected acyclic quiver of affine type, in any orientation,
and let $(v, w)$ be a banded pair. Then $\pi_{vw}$ surjects onto its band,
with exactly two families of exceptions:
\begin{enumerate}[label=\textup{(\arabic*)}]
\item \textup{(interior gap)} $Q$ is a source-sink orientation of a cycle
  and $(v, w)$ is its source-sink pair; the unique non-filled line is the
  diagonal $c_v - c_w = 0$;
\item \textup{(boundary extension)} $(v, w)$ has a unique extremal root and
  the geodesic from $v$ to $w$ is \emph{balanced}, carrying equally many
  arrows in each direction; the non-filled lines are $c_v - c_w = \pm 2$.
  The pairs with a unique extremal root are the distance-two spine pairs
  of $\widetilde{D}_n$ \textup{(}$n \geq 6$\textup{)}, the pair $(3, 6)$ of
  $\widetilde{E}_7$, and the pairs $(1, 6)$, $(2, 5)$, $(3, 7)$ of
  $\widetilde{E}_8$; for $(2, 5)$ the geodesic has odd length, so it is
  never balanced.
\end{enumerate}
In every exceptional case the non-filled lines carry only the finitely
many regular $c$-vectors.
\end{theorem}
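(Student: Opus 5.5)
The plan is to assemble the theorem from the type-by-type results already proved, organised by the shape of the underlying graph. By the Fomin--Zelevinsky classification, $Q$ is an acyclic orientation of the $(n+1)$-cycle (type $\widetilde{A}_n$) or an orientation of a $\widetilde{D}_n$ or $\widetilde{E}_{6,7,8}$ tree. Fix a banded pair $(v, w)$, so $\delta_v = \delta_w$ by Theorem~\ref{thm:band_dichotomy}. We then split according to the common null-root coefficient.

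\emph{Coefficient one.} If $\delta_v = \delta_w = 1$, Theorem~\ref{thm:delta_one} applies directly. It shows that $\pi_{vw}$ fills unless $Q$ is a source-sink cycle orientation and $\{v, w\} = \{s, t\}$, in which case only the diagonal fails. This is exactly family (1). The case covers all of type $\widetilde{A}_n$, where $\delta = (1, \dots, 1)$, as well as every leaf pair of the trees. The accompanying assertion that the failing diagonal carries only the finite regular part comes from Theorem~\ref{thm:main}(3), equivalently Proposition~\ref{prop:defect_obstruction}.

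\emph{Coefficient at least two.} Here $Q$ is a tree. By Lemma~\ref{lem:box} and Lemma~\ref{lem:low_levels}, the lines $0, \pm 1$ always fill, and every level satisfies $|\ell(\beta)| \leq 2$. Hence the only possible failure is at $\pm 2$, and it occurs exactly when every extremal root has zero defect. We then quote Theorem~\ref{thm:Dtilde_classification} for $\widetilde{D}_n$ and Theorem~\ref{thm:E_classification} for the $\widetilde{E}$ types. The main step is translating their conditions into the uniform ``unique extremal root with balanced geodesic'' form.

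For $\widetilde{D}_n$, the proof of Theorem~\ref{thm:Dtilde_classification} shows the following.
\begin{itemize}
\item For spine distance $d = 1$ there is no extremal root, so the band is $\{|c_v - c_w| \le 1\}$ and fills.
\item For $d = 2$ there is exactly one extremal root, $\beta_{a,a+1}$. The geodesic $u_a, u_{a+1}, u_{a+2}$ has two arrows, and its middle vertex is a source or a sink exactly when one arrow points each way along the walk, that is, when the geodesic is balanced.
\item For $d \geq 3$ there are several extremal roots ($a \le p < q \le b-1$ gives at least three), and the pair always fills.
\end{itemize}
So the unique-extremal-root pairs in $\widetilde{D}_n$ are precisely the distance-two spine pairs, which requires $n \geq 6$ for two spine vertices at distance two. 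They fail exactly when balanced.

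For the $\widetilde{E}$ types, Theorem~\ref{thm:E_classification} lists the pairs with extremal roots. Each has a unique one, and $\partial(\beta^*) = c(f-b)$ with $c \neq 0$, so failure is equivalent to $f = b$, i.e.\ to balance. The pair $(2,5)$ has a geodesic of odd length and so never balances.

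The remaining point is to confirm that no other pair has exactly one extremal root, so that the list in (2) is complete. In $\widetilde{E}$ this is the enumeration recorded in Theorem~\ref{thm:E_classification}. In $\widetilde{D}$ it is the count of $\beta_{p,q}$ above.

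In every exceptional case, the statement that the non-filled lines carry only finitely many regular $c$-vectors follows from Lemma~\ref{lem:filling_criterion} together with Proposition~\ref{prop:defect_obstruction}. I expect no serious obstacle, since the proof is purely an assembly of earlier results. The most delicate step is checking that the $\widetilde{D}_n$ condition ``middle vertex is a source or sink'' is literally the balance condition, and that the extremal-root uniqueness characterisation matches the listed pairs exactly.
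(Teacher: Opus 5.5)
Your proposal is correct and is essentially the paper's own proof. Theorem~\ref{thm:delta_one} disposes of the cycle and all coefficient-one pairs, Theorems~\ref{thm:Dtilde_classification} and~\ref{thm:E_classification} settle the coefficient-$\geq 2$ tree pairs, and the one new step is your observation that a distance-two spine geodesic is balanced exactly when its middle vertex is a source or a sink. (One small attribution slip: the bound $|\ell(\beta)| \leq 2$ comes from $\delta \leq 2$ in $\widetilde{D}_n$ and the root inspection in $\widetilde{E}$, not from Lemmas~\ref{lem:box} and~\ref{lem:low_levels}; you do not actually need it once you quote the two classification theorems.)
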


\begin{proof}
For the cycle, Theorem~\ref{thm:delta_one} and
Corollary~\ref{cor:diagonal_classification} leave exactly the source-sink
diagonals of Theorem~\ref{thm:main}. For the trees, coefficient-one pairs
are settled by Theorem~\ref{thm:delta_one}, and the coefficient-$\geq 2$
pairs by Theorems~\ref{thm:Dtilde_classification}
and~\ref{thm:E_classification}; it remains only to observe that for a
distance-two spine pair the geodesic consists of the two spine edges at the
middle vertex, which is balanced exactly when one arrow points each way;
i.e., when the middle vertex is a source or a sink, matching
Theorem~\ref{thm:Dtilde_classification}, as desired.
\end{proof}

\begin{remark}[Genericity and seed-dependence]
\label{rem:generic}
Theorem~\ref{thm:complete_classification} makes the seed-dependence of
Remark~\ref{rem:seed_dependence} quantitative. In type $\widetilde{D}_n$
with $n \geq 6$ exactly $32$ of the $2^{n}$ orientations have no
non-filling pair, namely the two directed-spine orientations with the four
leaf edges free: a middle vertex $u_{a+1}$ avoids being a source or a sink
only if the two spine arrows at it are aligned, and requiring this at every
interior spine vertex forces the whole spine to be directed. In
$\widetilde{E}_7$, $48$ of the $128$ orientations have a non-filling pair,
and in $\widetilde{E}_8$, $140$ of the $256$; by contrast
$\widetilde{D}_4$, $\widetilde{D}_5$, and $\widetilde{E}_6$ fill for every
orientation, the first two because no distance-two spine pair exists, the
last because no banded pair has an extremal root. Observe also that the
transjective difference set is the full interval $[-m_t, m_t]$, where $m_t$
denotes the largest transjective difference, for every banded pair and
every orientation of tree type (Lemmas~\ref{lem:box}
and~\ref{lem:low_levels}); the failure
mechanism is always the one of Proposition~\ref{prop:defect_obstruction},
the extremal lines being reached only by the finite regular part.
\end{remark}

\begin{remark}[Computational verification]
\label{rem:computation}
The classification is verified exhaustively by the ancillary scripts
accompanying this paper \textup{(}directory \texttt{anc/}\textup{)}. The
verification sweeps \emph{every} acyclic orientation of the cycles on up to
$7$ vertices, of $\widetilde{D}_n$ for $n \leq 13$, and of
$\widetilde{E}_6$, $\widetilde{E}_7$, $\widetilde{E}_8$. The sweep is exact
and orientation-factored: the box roots of an underlying graph are
enumerated once, since the Tits form does not depend on the orientation,
and per orientation only the defect form $\partial = \langle \delta, -
\rangle$ is recomputed; the filled lines are then read off
Lemma~\ref{lem:box}. Every test is exact integer arithmetic: a vector is a
real root if and only if $q(\beta) = 1$, and its defect is $\langle \delta, \beta
\rangle$, both computed over $\ZZ$. The resulting tables agree with
Theorems~\ref{thm:Dtilde_classification} through~\ref{thm:complete_classification}
in every case. Two further independent implementations, a breadth-first
mutation of the $(B, C)$-seeds keyed on the full seed and an integer
Coxeter-orbit enumeration of the transjective roots, reproduce the same
patterns on a sample of orientations, including the non-filling seeds of
Theorems~\ref{thm:Dtilde_classification} and~\ref{thm:E_classification}.
The scripts are short, dependency-free \textup{(}Python~3 with NumPy for
integer matrix algebra, no floating point in any test\textup{)}, and the
ancillary \texttt{anc/} directory includes a README with the exact commands
and the expected output.
\end{remark}

In summary, non-filling falls under exactly two headings, both now
classified over every affine type and every acyclic orientation: the
interior gap
at the source-sink diagonals of $\widetilde{A}_n$
(Theorem~\ref{thm:delta_one}), and boundary extension at the
unique-extremal-root pairs with balanced geodesic
(Theorem~\ref{thm:complete_classification}). In the language of reaches,
writing $m_t$ and $m_r$ for the largest value of $|c_v - c_w|$ attained by
a transjective, respectively a regular, $c$-vector, we have shown that
$m_t \in \{1, 2\}$ and $m_r \leq 2$ for every banded pair of coefficient at
least $2$, that the transjective differences always form the full interval
$[-m_t, m_t]$, and that failure is exactly the configuration
$m_r = 2 > 1 = m_t$; the quantity $m_r$ is a tube invariant, the largest
separation of $v$ and $w$ achieved by a consecutive run of quasi-simples in
a non-homogeneous tube, and the extremal root of a failing pair is exactly
such a run.

\section{The annulus as a surface: defect and Dehn twist}
\label{sec:surface}

In this section, we make the topological reading of Theorem~\ref{thm:main}
precise: on the annulus, the trichotomy
preprojective/regular/preinjective becomes the dichotomy
bridging/peripheral together with a sign, the defect is the crossing
number with the core curve, and the $\delta$-shift is the Dehn twist along
it. This places the filling theorem in the line of geometric descriptions
of $c$-vectors by non-self-crossing curves on surfaces
\textup{(}\cite{FominShapiroThurston2008, Hong2021}; conjecturally in
general \cite{LeeLeeMills2019}\textup{)}.

Let $C_{p, q}$ denote the annulus with $p$ marked points on the outer
boundary and $q$ on the inner, $p + q = n + 1$, and let $c$ be its core
curve. We work in the universal cover: identify the outer marked points
with $\ZZ$ \textup{(}reduction mod $p$\textup{)} and the inner marked
points with $\ZZ$ \textup{(}reduction mod $q$\textup{)}, the deck
transformation acting by $(a, b) \mapsto (a + p, b + q)$. An arc is
\emph{bridging} if it joins the two boundary components and
\emph{peripheral} otherwise; a bridging arc is determined by an endpoint
pair $(a, b)$ up to the deck action, and two bridging arcs $(a, b)$ and
$(a', b')$ have minimal crossing number
\[
  i\bigl((a, b), (a', b')\bigr)
  \;=\; \#\{\, k \in \ZZ : (a - a' - kp)(b - b' - kq) < 0 \,\}.
\]
Let $T$ be the \emph{fan} triangulation, with bridging arcs $(j, 0)$ for
$0 \leq j \leq p$ and $(p, j)$ for $1 \leq j \leq q - 1$. Every triangle
of $T$ has a side on a boundary: $p$ triangles on the outer boundary and
$q$ on the inner. The arrows of the adjacency quiver of $T$ run
consistently along the outer fan and consistently along the inner fan, so
the quiver is the source-sink orientation of the $(n+1)$-cycle, with $s$
and $t$ the two arcs shared by the outer and the inner fan. We use the
standard dictionary for triangulated unpunctured surfaces
\textup{(}\cite{FominShapiroThurston2008, ABCP2010}; the modules here are
string modules in the sense of \cite{ButlerRingel1987}\textup{)} in the
following form: the assignment $\gamma \mapsto M_\gamma$, where
$(\underline{\dim}\, M_\gamma)_i$ is the minimal crossing number of
$\gamma$ with $\tau_i$, is a bijection between the essential arcs of
$C_{p, q}$ not in $T$ and the exceptional indecomposable $kQ$-modules;
closed curves carry the band modules, which are never rigid.

\begin{proposition}[Surface reading of the annulus theorem]
\label{prop:surface}
Let $Q$, $T$, and $c$ be as above, and let $t_c$ denote the Dehn twist
along $c$. Then:
\begin{enumerate}[label=\textup{(\arabic*)}]
\item an arc $\gamma$ is bridging if and only if $M_\gamma$ is
  transjective, and peripheral if and only if $M_\gamma$ is regular;
  equivalently
  \[
    i(\gamma, c) \;=\; \bigl|\partial(\underline{\dim}\, M_\gamma)\bigr|
    \;\in\; \{0, 1\};
  \]
\item $t_c$ fixes every peripheral arc; for a bridging arc $\gamma$, each
  coordinate of $k \mapsto \underline{\dim}\, M_{t_c^{\,k}\gamma}$ is a
  $V$-shaped sequence of slopes $\mp 1$ \textup{(}possibly with a flat
  bottom\textup{)}, on each of the two arms of the orbit consecutive
  twists eventually add exactly $\delta$, and the two arms carry constant
  defect $+1$ and $-1$, respectively;
\item consequently Theorem~\ref{thm:main} reads topologically: the two
  off-diagonal lines of $\pi_{st}$ carry the bridging arcs, separated by
  the sign of the defect \textup{(}the two arms of the twist
  orbits\textup{)}, and along each arm the Dehn twist realizes the
  $\delta$-shift filling the line; the diagonal carries only the finitely
  many arcs disjoint from the core.
\end{enumerate}
\end{proposition}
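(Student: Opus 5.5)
The plan is to reduce everything to explicit crossing counts in the universal cover and then import the algebraic statements already proved: Lemma~\ref{lem:defect}, which gives $\partial = x_s - x_t$, and Theorem~\ref{thm:main}. First I would fix the fan triangulation concretely. Its arcs are $(0,0), (1,0), \dots, (p,0), (p,1), \dots, (p,q-1)$, where $(p,q)$ is identified with $(0,0)$, so there are $n+1$ of them. The two arcs shared by the outer and inner fans are $s = (0,0)$ and $t = (p,0)$, possibly with the roles of $s$ and $t$ swapped depending on the arrow convention; I would check this against the adjacency quiver. By the dictionary, $(\underline{\dim}\,M_\gamma)_i = i(\gamma, \tau_i)$. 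Hence $\partial(\underline{\dim}\,M_\gamma) = i(\gamma, s) - i(\gamma, t)$ up to a global sign, and every claim becomes a statement about the displayed crossing formula.

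For (1), let $\gamma = (a,b)$ be bridging. Both $s$ and $t$ have inner endpoint $0$, so the two crossing counts are $\#\{k : (a - kp)(b - kq) < 0\}$ and $\#\{k : (a - p - kp)(b - kq) < 0\}$. The set of $k$ with $(a-kp)(b-kq)<0$ is the set of integers strictly between $a/p$ and $b/q$, that is, an integer interval. Shifting $a$ by $p$ moves one endpoint of this interval by exactly $1$. So the two counts differ by exactly one, and the sign of the difference records which side of the lift of $c$ the arc's slope lies on. This gives $|\partial| = 1 = i(\gamma, c)$.

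Now let $\gamma$ be peripheral, say with both endpoints on the outer boundary. Then $\gamma$ crosses only those fan arcs whose outer endpoints lie strictly inside the boundary interval it cuts off. Since $s$ and $t$ have outer endpoints $0$ and $p$, which are congruent mod $p$, $\gamma$ crosses both or neither, so $\partial = 0 = i(\gamma, c)$. The same argument with $b \mapsto b+q$ handles peripheral arcs on the inner boundary. Combined with Lemma~\ref{lem:defect}, this proves (1).

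For (2), in the universal cover the twist $t_c$ acts on bridging arcs by moving one endpoint a full turn, $(a,b) \mapsto (a+p, b)$. It fixes peripheral arcs, because each lies in a collar of a single boundary component, which can be isotoped off $c$. For a fixed fan arc $\tau = (a',b')$, the coordinate $i((a+kp, b), \tau)$ counts integers strictly between $(a - a')/p + k$ and $(b - b')/q$. As a function of $k$ this is the distance between the two reals, rounded to integers, so it is $V$-shaped with slopes $\mp 1$. Its bottom is flat when both reals fall in the same unit interval, for a number of consecutive steps. Far out on either arm, every coordinate grows by $1$ per step. This gives an increment of $\delta = (1, \dots, 1)$ per twist. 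The defect computed in (1) depends only on which side of $(b-b')/q$ the shifted endpoint lies. It is therefore constant on each arm, and by (1) it equals $+1$ on one arm and $-1$ on the other.

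Part (3) then follows by feeding (1) and (2) into Theorem~\ref{thm:main}. By (1), the lines $c_s - c_t = \pm 1$ carry exactly the bridging arcs, split by the sign of the defect, and the diagonal carries exactly the peripheral arcs. By (2), twisting along an arm realizes the shifts $\beta \mapsto \beta + \delta$ that fill each off-diagonal line. The finiteness of the peripheral arcs matches the finiteness of the regular part from Lemma~\ref{lem:defect}.

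The main obstacle is the sign and endpoint bookkeeping: correctly identifying which fan arcs are $s$ and $t$ relative to the quiver orientation, and handling the boundary cases of the strict-inequality count, where an arc shares an endpoint with $\tau$. I would settle these first on the $\widetilde{A}_{2,2}$ example of Section~\ref{sec:example}, checking that the resulting dimension vectors reproduce the transjective and regular indicator lists of Example~\ref{ex:prufer}.
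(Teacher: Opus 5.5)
Your proposal is correct. For parts (2) and (3), and for the peripheral half of (1), it follows the paper. There, the crossing number of a peripheral arc with a bridging arc depends only on the bridging arc's endpoint on that boundary, and $s=(0,0)$, $t=(p,0)$ have the same endpoints in the annulus. On the arms of a twist orbit one reads off the asymptotic counts.

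The real difference is the bridging half of (1). You compute $\partial(\underline{\dim}\,M_\gamma)=i(\gamma,s)-i(\gamma,t)$ directly from the crossing formula. The paper never does this count for a general bridging arc. Instead it takes each regular real Schur root $\alpha_I$ or $\delta-\alpha_I$ (with $I$ avoiding $\{s,t\}$, from Lemma~\ref{lem:defect}) and builds a peripheral arc realizing it. It then uses bijectivity of $\gamma\mapsto M_\gamma$ to conclude that bridging arcs carry exactly the transjective modules, with $|\partial|=1$ coming from the $0/1$-indicator form of the $\widetilde{A}_n$ roots. Your route is more self-contained: it needs only the dimension-vector formula of the dictionary, not its bijectivity, and it is the same count you reuse in (2). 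The paper's route needs only the asymptotic count. In exchange it gives an explicit arc-by-arc description of the diagonal, which makes the finiteness in (3) concrete.

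There are three things to tighten.

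First, the step ``moving one endpoint by $1$ changes the count by exactly one'' is not valid as stated. It breaks when the interval reverses, $a/p-1<b/q<a/p$, and at integer endpoints. Write $N(x,y)$ for the number of integers strictly between $x$ and $y$. A short case check shows that $N(a/p,b/q)-N(a/p-1,b/q)=\pm1$ for every bridging arc not in $T$. It equals $0$ exactly for the arcs of $T$, namely $(j,0)$ with $0\le j\le p$ and $(p,j)$ with $1\le j\le q-1$. So it is the hypothesis $\gamma\notin T$, not just endpoint bookkeeping, that makes your claim true.

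Second, (1) alone gives only $|\partial|=1$ on each arm. The opposite signs come from the two limiting regimes: $a/p+k-1>b/q$ gives difference $+1$, and $a/p+k<b/q$ gives difference $-1$. This is exactly the paper's computation.

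Third, $i(\gamma,c)=1$ for bridging $\gamma$ needs its one-line justification: $c$ separates the two boundary components, and a straight lift crosses it once.
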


\begin{proof}
We first prove the crossing estimates for peripheral arcs. A peripheral
arc $\gamma$ has both endpoints on one boundary component, say the outer
one. Cutting $C_{p, q}$ along $\gamma$ leaves the inner boundary in a
single complementary region, an annular region into which $c$ can be
isotoped; so $\gamma$ is disjoint from a representative of $c$, is fixed
by $t_c$ up to isotopy, and has $i(\gamma, c) = 0$. Moreover, for any
bridging arc $\beta$, the minimal crossing number $i(\gamma, \beta)$ is
$0$ or $1$, and it depends only on the outer endpoint of $\beta$: if that
endpoint lies in the complementary region of $\gamma$ not containing the
inner boundary, then $\beta$ must cross $\gamma$ exactly once to reach
the inner boundary, and otherwise $\beta$ can reach the inner boundary
inside the other region and crosses $\gamma$ not at all. Observe now that
the two fan-junction arcs $s = (0, 0)$ and $t = (p, 0)$ have the
\emph{same} outer endpoint \textup{(}$0$ mod $p$\textup{)} and the same
inner endpoint \textup{(}$0$ mod $q$\textup{)}, differing only by
winding. Hence $i(\gamma, s) = i(\gamma, t)$; i.e.,
$\partial(\underline{\dim}\, M_\gamma) = 0$, and $M_\gamma$ is regular.

Next, every regular exceptional module is realized by a peripheral arc.
By Lemma~\ref{lem:defect} the regular real Schur roots of $Q$ are the
$\alpha_I$ with $I$ a cyclic interval containing both or neither of
$\{s, t\}$; equivalently, they are the $\alpha_I$ and $\delta - \alpha_I$
with $I$ an interval avoiding $\{s, t\}$. Such an interval consists of
consecutive arcs of one fan, say the outer one, whose outer endpoints
avoid the junction point $0$. The peripheral arc cutting off a disk
containing exactly the outer endpoints of the arcs of $I$ crosses exactly
the arcs of $I$, once each, realizing $\alpha_I$; and the peripheral arc
enclosing the inner boundary, chosen so that its inner-containing region
meets the outer boundary in exactly those endpoints, crosses exactly the
arcs \emph{not} in $I$, once each, realizing $\delta - \alpha_I$. Since
$\gamma \mapsto M_\gamma$ is a bijection onto the exceptional modules,
the peripheral arcs carry regular modules, and every regular exceptional
module is carried by a peripheral arc, the bridging arcs carry exactly
the transjective modules. Finally, a bridging arc meets the core, since
$c$ separates the two boundary components, and a representative crossing
once gives $i(\gamma, c) = 1$; combined with $|\partial| = 1$ on
transjective and $0$ on regular classes of $\widetilde{A}_n$
\textup{(}Lemma~\ref{lem:defect}\textup{)}, this proves the displayed
equality of (1).

For (2), the twist acts on a bridging arc by $(a, b) \mapsto (a + p, b)$,
which equals $(a, b - q)$ modulo the deck action. Writing $u = a - a_i$
and $w = b - b_i$ for the endpoint data relative to $\tau_i = (a_i,
b_i)$, the displayed crossing formula gives
\[
  i\bigl(t_c^{\,k}\gamma, \tau_i\bigr)
  \;=\; \#\Bigl\{\, j \in \ZZ :
  j \text{ lies strictly between } \tfrac{w}{q} \text{ and }
  k + \tfrac{u}{p} \,\Bigr\},
\]
which, as a function of $k$, is a $V$-shaped sequence with slopes
$\mp 1$, possibly with a flat bottom. For $k$ beyond the largest of the
thresholds $\frac{w}{q} - \frac{u}{p}$ over $i$, every coordinate
increases by exactly $1$ per twist; i.e., the dimension vector increases
by exactly $\delta$, and symmetrically in the other direction. For the
defect along the arms, observe that $s$ and $t$ have the same $w$ and
that their thresholds differ by exactly $1$. For $k$ large positive both
thresholds are exceeded, and the two counts differ by the number of
integers in the half-open unit interval
$[\,k + \frac{a - p}{p},\, k + \frac{a}{p}\,)$, which is one; for $k$
large negative the same window lies on the other side of $\frac{b}{q}$
and the difference is $-1$. Since $\partial$ is unchanged by adding
$\delta$, the defect is constant on each arm, equal to $+1$ on one and
$-1$ on the other, as claimed.

Part (3) is now a translation. The lines $c_s - c_t = \pm 1$ are the
transjective classes \textup{(}Theorem~\ref{thm:defect_detection}\textup{)},
which by (1) are the bridging arcs, split by the sign of $\partial$; by
(2) the two signs are carried by the two arms of the twist orbits, along
which consecutive twists add exactly $\delta$, so the Dehn twist realizes
the $\delta$-shift filling each off-diagonal line
\textup{(}Theorem~\ref{thm:main}\textup{)}. The diagonal is the regular
part, which by (1) consists of the peripheral arcs; these are disjoint
from the core and finite in number, as desired.
\end{proof}

Observe that this clean dichotomy is special to the annulus; the orbifold
surface models of $\widetilde{D}$ and $\widetilde{E}$ are more intricate,
consistent with the richer non-filling behavior found in
Theorems~\ref{thm:Dtilde_classification}
and~\ref{thm:E_classification}.

\section{Further directions}
\label{sec:further}

In this section, we collect two directions for further work: a conceptual
derivation of the balance law of
Theorem~\ref{thm:complete_classification}, and the extension of the
surface reading beyond the annulus.

\medskip\noindent\textbf{A conceptual derivation of the balance law.}\;
The classification of Section~\ref{sec:classification} identifies the
failure locus by explicit root calculus; a conceptual derivation would be
desirable. The extremal root of a failing pair is a consecutive run of
quasi-simples in a non-homogeneous tube (for $\widetilde{D}_n$, the tube of
rank $n - 2$), and the balance criterion of
Theorem~\ref{thm:complete_classification} states exactly when this run is
regular for the chosen seed. A proof reading the criterion directly off the
tube data (the ranks and the position of $v, w$ among the quasi-simples),
rather than off the root list, would explain both why the extremal pairs
are where they are (in particular why $\widetilde{E}_6$ has none and
$\widetilde{E}_8$ exactly three) and why the answer depends only on the
geodesic from $v$ to $w$. The mechanism is concretely visible in
$\widetilde{D}_6$, with spine $u_1, u_2, u_3$ and leaves $\ell_1, \ell_2$
at $u_1$ and $\ell_3, \ell_4$ at $u_3$: for a seed with a sink at the
middle vertex $u_2$, the quasi-simples of the rank-$4$ tube have dimension
vectors $e_{u_1}$, $e_{u_2} + e_{u_3}$, $e_{u_3} + e_{\ell_3} +
e_{\ell_4}$, and $e_{\ell_1} + e_{\ell_2} + e_{u_1} + e_{u_2}$, and the
extremal root $\beta^{*} = e_{\ell_1} + e_{\ell_2} + 2e_{u_1} + e_{u_2}$
is the sum of the last and the first, a run of two consecutive
quasi-simples: the pair $(u_1, u_3)$ fails. For the directed-spine seed
the quasi-simples are instead $e_{u_1}$, $e_{u_2}$, $e_{u_3}$, and
$\delta - e_{u_1} - e_{u_2} - e_{u_3}$, no consecutive run of which sums
to $\beta^{*}$: the pair fills. In general the quasi-simples of the large
tube are the string modules of the spine segments between consecutive
direction changes of the seed, so the balance law should follow from a
description of how $v$ and $w$ sit among these segments. A second natural
direction is wild acyclic type, where no null root is available and
coordinate differences of real Schur roots are typically unbounded;
identifying the correct analogue of the band and of the filling question
there seems to require new ideas.

\medskip\noindent\textbf{Beyond the annulus.}\;
Proposition~\ref{prop:surface} is special to the annulus. Types
$\widetilde{D}$ and $\widetilde{E}$ admit orbifold models, and extending
the surface reading there (identifying the defect with an intersection
number and the $\delta$-shift with a twist) would give the balance law of
Theorem~\ref{thm:complete_classification} a topological form; the fact
that the failure criterion depends only on the geodesic from $v$ to $w$
is suggestive of a curve.

\bibliographystyle{amsplain}
\bibliography{main}

\end{document}